\documentclass[letterpaper,11pt]{amsart}

\usepackage[left=1in,right=1in,top=.75in,bottom=.75in]{geometry}

\usepackage{latexsym,amssymb,amsmath,amsthm,amsopn,verbatim}

%%%%%%%%%%%%%%%%%%%%%%%%%%%%%%%%%%%%%%%%%%%%%%%%%%%%%%%%%%%%%%%%%%%%%
%   Debug Mode:                                                     %
%%%%%%%%%%%%%%%%%%%%%%%%%%%%%%%%%%%%%%%%%%%%%%%%%%%%%%%%%%%%%%%%%%%%%

%
%%%%%%%%%%%%%%%%%%%%%%%%%%%%%%%%%%%%%%%%%%%%%%%%%%%%%%%%%%%%%%%%%%%%%
%   Clean Mode:                                                     %
%%%%%%%%%%%%%%%%%%%%%%%%%%%%%%%%%%%%%%%%%%%%%%%%%%%%%%%%%%%%%%%%%%%%%
%\newcommand     {\todo}[1]   {}
%\newcommand{\mute}[2] {}
%\newcommand     {\printname}[1] {}
%
%%%%%%%%%%%%%%%%%%%%%%%%%%%%%%%%%%%%%%%%%%%%%%%%%%%%%%%%%%%%%%%%%%%%%

%\newcommand{\bibitemm}[2]{\bibitem[#1]{#2}\printname{#2}}

\newtheorem{theorem}{Theorem}[section]

\newtheorem{prop}[theorem]{Proposition}
\newtheorem{lemma}[theorem]{Lemma}

\theoremstyle{definition}
\newtheorem{definition}[theorem]{Definition}

\theoremstyle{definition}
\newtheorem{example}[theorem]{Example}

\theoremstyle{remark}
\newtheorem{remark}[theorem]{Remark}

\theoremstyle{definition}

\newtheorem*{theorem*}{Theorem \ref{maintheorem}}

\newcommand{\R}{\mathbb{R}}
\newcommand{\T}{\mathcal{T}}
\newcommand{\Z}{\mathbb{Z}}
\newcommand{\C}{\mathbb{C}}
\newcommand{\RP}{\R\mathbb{P}}
\renewcommand{\Re}{\textrm{Re}}

\renewcommand{\ker}{\textrm{ker}}
\newcommand{\im}{\textrm{im}}

%\DeclareMathAlphabet{\mathpzc}{OT1}{pzc}{m}{it}
%\newcommand{\todo}[1]{{\scriptsize{\ #1 \ }\marginpar{*}}}

%%%%%%%%%%%%%%%%%%%%%%%%%%%%%%%%%%%%%%%%%%%%%%%%%%%%%%%%%%%%%
%debug mode 
%\newcommand{\note}[1]{{\scriptsize{\ #1 \ }\marginpar{*}}}
%%%%%%%%%%%%%%%%%%%%%%%%%%%%%%%%%%%%%%%%%%%%%%%%%%%%%%%%%%%%%%%%
% clean mode.
\newcommand{\note}[1]{}
%%%%%%%%%%%%%%%%%%%%%%%%%%%%%%%%%%%%%%%%%%%%%%%%%%%%%%%%%%%%%%%%

\begin{document}

\title{Folded Symplectic Toric Four-Manifolds}
\author{Christopher R. Lee}
\address{Department of Mathematics, University of Portland, Portland, OR 97203}
\email{leec@up.edu}
\date{\today}

\begin{abstract}
We show that two orientable, four-dimensional folded symplectic toric manifolds are isomorphic provided that their orbit spaces have trivial degree-two integral cohomology and there exists a diffeomorphism of the orbit spaces (as manifolds with corners) preserving orbital moment maps. 
\end{abstract} 

\maketitle
\section{Introduction}

The classification of compact, connected symplectic toric manifolds by their moment images was completed by Delzant in 1988 \cite{D}. Since then, a number of extensions of Delzant's Theorem have been proved. Notably, a classification of compact, symplectic toric {\em orbifolds} was given by Lerman and Tolman in \cite{LT} and, more recently, Karshon and Lerman classified {\em noncompact} symplectic toric manifolds (\cite{KL}). Generally speaking, classification results of this kind rely on existence and uniqueness results: for each classifying gadget one must show there exists a corresponding class of manifolds and that, up to some topological restriction, this class contains only one isomorphism type.  This paper deals with a different type of extension of Delzant's results. Instead of changing the general assumptions about the manifold, we instead focus on properties of the differential form. Namely, we consider {\em folded symplectic forms}. 

A folded symplectic form is, colloquially, a closed two-form that is symplectic away from a hypersurface, $Z$, in a $2n$-dimensional manifold, $M$, whose degeneracies are reasonably well-controlled on $Z$. Here is the precise definition.

\begin{definition}
Let $\omega$ be a two-form on $M^{2n}$ such that the top exterior power, $\omega^n$, is transverse to the zero section of the orientation bundle $\bigwedge^{n}(T^*M)$. The zero locus, $Z:=(\omega^n)^{-1}(0)$, is then a codimension one submanifold of $M$. Let $i:Z \hookrightarrow M$ be the inclusion. If $i^*\omega^{n-1}$ is nonvanishing on $Z$, the two-form $\omega$ is a {\bf folded symplectic form}. A {\bf folded symplectic manifold} is a pair $(M,\omega)$ where $\omega$ is a folded symplectic form on the manifold $M$. The hypersurface $Z$ is called the {\bf fold}. The complement, $M/Z$ of the fold is the {\bf symplectic locus}. A {\bf folded symplectomorphism} between folded symplectic manifolds $(M,\omega)$ and $(M',\omega')$ is a diffeomorphism $f:M \to M'$ such that 
\begin{displaymath}
f^*\omega' = \omega.
\end{displaymath}
\end{definition}

The theory of folded symplectic forms has some similarity with that of symplectic forms, but there are some important and striking differences. 

\begin{example}\label{foldedsympex}
\begin{enumerate}
\item The two-form
\begin{displaymath}
\omega_{\textrm{fstd}}=x_1dx_1\wedge{dy_1}+\sum_{j=2}^{n}dx_j \wedge dy_j
\end{displaymath}
is the standard folded symplectic form on real Euclidean space
\begin{displaymath}
\R^{2n} = \{(x_1,\ldots,x_n,y_1,\ldots,y_n) \hspace{1pt} | \hspace{1pt} x_j,y_j \in \R\}.
\end{displaymath}
The fold is the hypersurface $\{x_1=0\}$. By pulling $\omega_{\textrm{fstd}}$ back by a choice of isomorphism,  any even dimensional vector space admits a folded symplectic form. In particular, the standard folded symplectic form on $\C^n$ is
\begin{displaymath}
\frac{i}{2}\Re(z_1)dz_1\wedge d\bar{z}_1 + \frac{i}{2}\sum_{j=2}^{n} dz_j \wedge d\bar{z}_j.
\end{displaymath}

\item Let $\Sigma$ be a closed, orientable surface and $\Omega$ a nonvanishing two-form on $\Sigma$. Let $f:\Sigma \to \R$ be a smooth function transverse to $0 \in \R$. Then, $f\Omega$ is a folded symplectic form on $\Sigma$.  For example, the two-form $\omega=hdh\wedge{d\theta}$ is a folded symplectic form on the sphere $S^2$ with fold $\{h=0\}$. 

The antipodal map
\begin{eqnarray*}
f:S^2 &\to& S^2 \\
(h,\theta) & \mapsto & (-h,\theta+\pi)
\end{eqnarray*}
satisfies $f^*\omega = \omega$ and so $\omega$ descends to give the quotient  $S^2/f$ the structure of a folded symplectic manifold. This quotient is diffeomorphic to the real projective plane $\RP^2$ and so, in contrast to symplectic manifolds, folded symplectic manifolds need not be orientable. 

\item Cannas da Silva, Guillemin, and Woodward show (in \cite{dSGW}), that every even-dimensional sphere, $S^{2n}$, has a folded symplectic form obtained by pulling back the standard symplectic form on $\R^{2n}$ by the map on $S^{2n}$ that folds about a great circle. Note that a sphere admits a symplectic form if and only if it is two-dimensional.

\item In \cite{CdS2}, Cannas da Silva proves that every orientable four-manifold admits a folded symplectic structure. This result indicates that folded structures are natural, ubiquitous, and worthy of further study.

\item If $(M,\omega)$ is a folded symplectic manifold  and $(N,\eta)$ is a symplectic manifold, then $M \times N$ is a folded symplectic manifold with fold-form $\omega \oplus \eta$. Note, however, that the product of two arbitrary folded symplectic manifolds may not be a folded symplectic manifold: the form $h_1dh_1\wedge d\theta_1 + h_2dh_2\wedge d\theta_2$ is {\em not} a folded symplectic form on $S^2 \times S^2$ since the second wedge power is not transverse to the zero section of $\bigwedge^{2n}(T^*(S^2 \times S^2))$. 
\end{enumerate}
\end{example}

By Darboux's Theorem, we know that every symplectic manifold looks locally like $\R^{2n}$ with its standard symplectic form. Similarly, there is a local normal form theorem for folded symplectic manifolds. Following \cite{dSGW}, we assume that $M$ is an {\em oriented} folded symplectic manifold and define a vector bundle $\ker(\omega) \to Z$ by setting
$$
\ker(\omega)_{z} = \{X \in T_{z}M \hspace{2pt} | \hspace{2pt} \iota_{X}\omega_{z} = 0 \}
$$
for each $z \in Z$. This is a rank two bundle and is oriented over $Z$ via the orientations induced on $i^*TM/\ker(\omega)$ by $\omega$ and that of $TM$. Using the inclusion $i:Z \to M$, we regard $TZ$ as the subbundle of $TM$ of vectors tangent along $Z$. Then, the intersection $\ker(\omega) \cap TZ \to Z$ is an oriented line bundle of tangent vectors along $Z$. We also note that
$$
T_{z}Z+\ker(\omega)_{z} = T_{z}M
$$
since the kernel of $\omega_{z}$ is two-dimensional and $\omega^{n-1}$ is nonvanishing on $Z$. The following it our version of the statement of Theorem 1 in \cite{dSGW}.

\begin{theorem}\label{thmoneindSGW}
Let $(M,\omega)$ be a compact, oriented, folded symplectic manifold with fold $Z$. Choose a positively oriented section $V$ of $\ker(\omega)\cap TZ$. Then, for any $\alpha \in \Omega^{1}(Z)$ with $\alpha(V) \equiv{1}$, there exists a neighborhood $U$ of $Z$ and a diffeomorphism $\psi:Z\times{\R} \to U$ such that $\psi^*\omega = p^*i^*\omega+d(t^2p^*\alpha)$ where $p:Z\times{\R} \to Z$ is projection on the first factor and $i:Z \to Z\times{\R}$ is the inclusion. 
\end{theorem}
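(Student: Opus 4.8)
The plan is to run a Moser deformation argument adapted to the fold, after first putting $\omega$ into a convenient preliminary form near $Z$ by means of a collar built from $\ker(\omega)$. Write $\sigma:=i^*\omega$. Since $\ker(\omega)\to Z$ is an oriented rank-two bundle containing the oriented line bundle $\ker(\omega)\cap TZ$, I can choose a section $W$ of $\ker(\omega)\to Z$ so that $(V,W)$ is a positively oriented frame of $\ker(\omega)$; because $\ker(\omega)_z\not\subseteq T_zZ$, any such $W$ is automatically transverse to $Z$. Extend $W$ to a vector field near $Z$ in $M$, still transverse, and let $\psi_0(z,t)=\Phi_t^W(z)$ be its flow. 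Compactness of $Z$ gives $\varepsilon>0$ with $\psi_0\colon Z\times(-\varepsilon,\varepsilon)\to U_0$ a diffeomorphism onto a neighborhood of $Z$, with $(\psi_0)_*\partial_t|_{Z\times\{0\}}=W$ and $\psi_0|_{Z\times\{0\}}$ the inclusion. Put $\omega_0:=\psi_0^*\omega$.

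The key preliminary observation is that $\iota_{\partial_t}\omega_0=\psi_0^*(\iota_W\omega)$ vanishes along $Z\times\{0\}$ since $W|_Z\in\ker(\omega)$, and that $i^*\omega_0=\sigma$; hence $\omega_0-p^*\sigma$ is a closed two-form vanishing pointwise on $Z\times\{0\}$. Applying the relative Poincar\'e lemma along the homotopy $(z,t)\mapsto(z,st)$ gives
\[
\omega_0-p^*\sigma=d\kappa_0,\qquad \kappa_0(z,t)=\int_0^1 t\,\bigl(\iota_{\partial_t}\omega_0\bigr)(z,st)\,ds .
\]
Because $\iota_{\partial_t}\omega_0$ carries no $dt$ and vanishes at $t=0$, it equals $t$ times a smooth family of one-forms on $Z$, so $\kappa_0=t^2\eta_0$ for a smooth one-form $\eta_0$ on $Z\times(-\varepsilon,\varepsilon)$ with no $dt$ component, and $\omega_0=p^*\sigma+d(t^2\eta_0)$. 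Expanding, $\omega_0^{\,n}=2nt\,dt\wedge\eta_0\wedge p^*\sigma^{n-1}+O(t^2)$, so transversality of $\omega_0^{\,n}$ to the zero section (inherited from $\omega$ via the diffeomorphism $\psi_0$) forces $\eta_0(V)\neq 0$ along the fold; a direct check in the local model $\omega_{\mathrm{fstd}}$ shows that with the orientation conventions fixed above—this is exactly where orientability of $M$ and positivity of $V$ are used—one in fact has $\eta_0(V)>0$ there, hence on a neighborhood of $Z\times\{0\}$. Note also $(p^*\alpha)(V)=\alpha(V)\equiv 1>0$, and the target form $\omega_1:=p^*\sigma+d(t^2p^*\alpha)$ has $\omega_1^{\,n}=2nt\,dt\wedge p^*\alpha\wedge p^*\sigma^{n-1}+O(t^2)$.

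Now interpolate: set $\eta_s:=(1-s)\eta_0+s\,p^*\alpha$ and $\omega_s:=p^*\sigma+d(t^2\eta_s)=(1-s)\omega_0+s\,\omega_1$ for $s\in[0,1]$. By the previous paragraph $\eta_s(V)>0$ near $Z\times\{0\}$, so each $\omega_s$ is nondegenerate for small $t\neq 0$ and degenerate exactly on $Z\times\{0\}$, where $\ker\omega_s=\langle V,\partial_t\rangle$; moreover, in a frame adapted to this kernel and its $\sigma$-complement the matrix of $\omega_s$ has an invertible $(2n-2)\times(2n-2)$ block, a skew $2\times2$ "kernel" block equal to $t$ times an invertible matrix plus $O(t^2)$, and $O(t)$ coupling, so $t\,\omega_s^{-1}$ extends smoothly across $t=0$. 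The Moser equation asks for a time-dependent vector field $X_s$ with $\iota_{X_s}\omega_s=\tau$, where $\tau:=t^2(\eta_0-p^*\alpha)$; indeed $d\tau=\omega_0-\omega_1=-\tfrac{d}{ds}\omega_s$, so $\mathcal L_{X_s}\omega_s=d\iota_{X_s}\omega_s=d\tau$ yields $\tfrac{d}{ds}\phi_s^*\omega_s=0$ for the flow $\phi_s$ of $X_s$. The point is that $\tau$ vanishes to second order at the fold, so $X_s=(t\,\omega_s^{-1})\cdot\bigl(t(\eta_0-p^*\alpha)\bigr)$ is smooth up to $Z\times\{0\}$, where it vanishes. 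Since $Z$ is compact and $X_s$ vanishes on $Z\times\{0\}$, the flow $\phi_s$ is defined for all $s\in[0,1]$ on a neighborhood of $Z\times\{0\}$ and fixes $Z\times\{0\}$ pointwise; then $\phi_1^*\omega_1=\omega_0=\psi_0^*\omega$, so $\psi:=\psi_0\circ\phi_1^{-1}$ satisfies $\psi^*\omega=\omega_1=p^*i^*\omega+d(t^2p^*\alpha)$ on a neighborhood of the fold (shrinking $\varepsilon$ and reparametrizing $t$ if one wishes to take $Z\times\R$ as the literal domain).

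The main obstacle is the last step—making sense of the Moser vector field across the fold, where $\omega_s$ is degenerate. This is resolved by the combination of (i) the relative Poincar\'e lemma producing a primitive with the explicit factor $t^2$, so that the Moser primitive $\tau$ vanishes to second order, and (ii) the fact that $\omega_s^{-1}$ has only a first-order pole at $t=0$, because the kernel block of $\omega_s$ degenerates \emph{linearly} in $t$—which is precisely the transversality condition in the definition of a folded symplectic form. A secondary delicate point is the orientation bookkeeping ensuring $\eta_0(V)$ and $\alpha(V)$ have the same sign, so that the straight-line homotopy $\omega_s$ remains folded symplectic throughout; this is where the orientability hypothesis enters the argument.
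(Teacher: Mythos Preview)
The paper does not actually prove this theorem: it is stated as the author's reformulation of Theorem~1 in \cite{dSGW} and cited without proof. The closest thing to a proof in the paper is Lemma~\ref{equivariantlocalnormalform}, an equivariant version, whose proof defers to \cite{dSGW} for the preliminary normal form $\omega_0 = p^*i^*\omega_0 + t\mu_0$ with $\mu_0(W_z,V_z)>0$ and then runs the same Moser deformation you carry out.

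Your argument is correct and is essentially the standard \cite{dSGW} proof that the paper is citing. The collar built from a transverse section $W$ of $\ker(\omega)$, the relative Poincar\'e lemma yielding a primitive with a $t^2$ factor, the straight-line interpolation $\omega_s$, and the Moser flow fixing $Z$ are all the same ingredients that appear (more tersely) in the paper's proof of Lemma~\ref{equivariantlocalnormalform}. Where you add value is in explicitly confronting the degeneracy: the paper simply asserts ``there is an invariant vector field $X_s$ such that $\iota(X_s)\omega_s=\beta$,'' whereas you explain why this is solvable smoothly across the fold (the primitive vanishes to second order while $\omega_s^{-1}$ has only a first-order pole, the latter being exactly the transversality in the definition of a fold). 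You also make explicit the orientation bookkeeping ensuring $\eta_0(V)>0$, which the paper records only as the inequality $\mu_0(W_z,V_z)>0$ inherited from \cite{dSGW}.

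One small remark: your closing parenthetical about reparametrizing $t$ to make $Z\times\R$ the literal domain is not quite right, since a reparametrization would alter the form $d(t^2 p^*\alpha)$. But this is an artifact of the theorem's phrasing (a tubular neighborhood of a compact $Z$ is diffeomorphic to $Z\times\R$, yet the normal form is really valid on $Z\times(-\varepsilon,\varepsilon)$); it is not a flaw in your argument.
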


For the remainder of this paper, our focus will be on Hamiltonian actions of tori on folded symplectic manifolds. 

\begin{definition}
An action of a Lie group $G$ on a folded symplectic manifold $(M,\omega)$ is a {\bf folded Hamiltonian action} if there is a {\bf moment map}. That is, for every $\xi$ in the Lie algebra $\mathfrak{g}$ of $G$ there exists a smooth function $\Phi: M \to \mathfrak{g}^*$ that is equivariant with respect to the coadjoint action of $G$ on $\mathfrak{g}^*$ and is such that $\iota(\xi_{_M})\omega = -d\Phi^{\xi}$. (Here, $\xi_{M}$ is the vector field on $M$ induced by $\xi$, {\em i.e.}, $\xi_M = \left.\frac{d}{dt}\right|_{t=0} \exp{(t\xi)}$.) A folded symplectic manifold admitting a folded Hamiltonian action of a Lie group $G$ will be called a {\bf folded Hamiltonian $G$-manifold} and we denote such via a triple $(M,\omega,\Phi)$. An {\bf isomorphism} of folded Hamiltonian $G$-manifolds $(M,\omega,\Phi)$ and $(M',\omega',\Phi')$ is an equivariant folded symplectomorphism $\psi:M \to M'$ such that $\psi^*\Phi' = \Phi$.   For a torus $\T$, a {\bf folded symplectic toric manifold} is a compact, connected, folded Hamiltonian $\T$-manifold $(M,\omega,\Phi)$ such that the torus action is effective and $2\dim{\T}=\dim{M}$. 
\end{definition}

\begin{remark}
The assumption that a folded symplectic toric manifold be connected is included for convenience. However, unless otherwise specified, we will assume that a folded symplectic toric manifold is compact.  
\end{remark}

\begin{comment}
\begin{remark}
Note that if a Lie group $G$ acts on a folded symplectic manifold $(M,\omega)$ preserving the folded symplectic form, the action also preserves the fold $Z$. 
\end{remark}
\end{comment}

Since the torus is Abelian, the coadjoint action of $\T$ on the dual Lie algebra $\mathfrak{t}^*$ is trivial. Hence, the moment map $\Phi:M \to \mathfrak{t}^*$ of a folded symplectic toric manifold is invariant and thus descends to a map
\begin{displaymath}
\underline{\Phi}:M/\T \to \mathfrak{t}^*
\end{displaymath}
called the {\bf orbital moment map}. The orbital moment map has the same image in $\mathfrak{t}^*$ as the moment map; that is, $\Phi = \underline{\Phi} \circ \pi$ where $\pi:M \to M/\T$ is the orbit projection. 

A general classification of folded symplectic toric manifolds is not yet complete. A certain subclass, however, has been classified. In \cite{CdSGP}, Cannas da Silva, Guillemin, and Pires classify the so-called {\em origami manifolds}, which are orientable folded symplectic toric manifolds having extra conditions regarding the restriction of the fold form to the folding hypersurface. In what follows, we present a first step in the classification of folded symplectic toric manifolds, a uniqueness result in the case of compact, orientable, four-dimensional manifolds. Our main theorem follows.

\begin{theorem}\label{maintheorem}
Let $(M, \omega, \Phi)$ and $(M', \omega', \Phi')$ be orientable folded symplectic toric four-manifolds. Suppose that $H^2(M/\T; \Z)=0$. Then, $M$ and $M'$ are isomorphic if there exists a diffeomorphism $h:M/\T \to M'/\T$ of manifolds with corners such that $h^*\underline{\Phi}' = \underline{\Phi}$. 
\end{theorem}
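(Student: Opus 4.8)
The plan is to reconstruct $(M,\omega,\Phi)$ from the orbital data $\underline{\Phi}\colon M/\T\to\mathfrak t^*$ by a patching argument, exploiting the vanishing of $H^2(M/\T;\Z)$ to kill the obstruction to gluing local models. First I would analyze the local structure: away from the fold, the action is a genuine Hamiltonian torus action, so the local normal form theorem for symplectic toric manifolds (Delzant/Karshon--Lerman) applies and $M$ looks locally like the standard model determined by the moment polytope data; near a fold point, Theorem~\ref{thmoneindSGW} gives a $\T$-equivariant normal form $\psi^*\omega = p^*i^*\omega + d(t^2 p^*\alpha)$ on $Z\times\R$, and I would upgrade this to an equivariant statement by averaging the auxiliary choices ($V$, $\alpha$) over $\T$. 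The key point is that in dimension four the fold $Z$ is three-dimensional and carries a locally free (in fact, on the open dense symplectic locus, free modulo the isotropy of the boundary strata) $\T^2$-action, so $Z/\T$ is a one-manifold inside $\partial(M/\T)$ or its interior; a careful case analysis of the isotropy along $Z$ (using that $i^*\omega^{n-1}=i^*\omega$ is nonvanishing, hence the $\T$-orbits in $Z$ are nondegenerate) pins down the model in a neighborhood of each orbit.

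Second, given the diffeomorphism $h\colon M/\T\to M'/\T$ with $h^*\underline{\Phi}'=\underline\Phi$, I would cover $M/\T$ by contractible open sets $\{U_i\}$ (shrinking if necessary so each $U_i$ meets at most one connected component of the fold's image and one corner stratum) and, using the local models above together with $h$, build $\T$-equivariant folded symplectomorphisms $\psi_i\colon \pi^{-1}(U_i)\to (\pi')^{-1}(h(U_i))$ intertwining the moment maps. On overlaps $\pi^{-1}(U_{ij})$ the composition $\psi_j^{-1}\circ\psi_i$ is an automorphism of the local model covering the identity on the base and fixing the moment map, hence — by the standard argument identifying such automorphisms with $\T$-valued functions, i.e.\ with sections of the orbit bundle — is given by a smooth map $g_{ij}\colon U_{ij}\to\T$. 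These $\{g_{ij}\}$ form a Čech $1$-cocycle representing a class in $H^1(M/\T;\underline\T)\cong H^2(M/\T;\Z)$ (via the exponential sequence $0\to\Z\to\R\to\T\to 0$ and the fact that $M/\T$ admits partitions of unity so the $\R$-valued sheaf is acyclic). Since $H^2(M/\T;\Z)=0$, the cocycle is a coboundary, $g_{ij}=g_i g_j^{-1}$, and replacing $\psi_i$ by $\psi_i$ composed with the automorphism $g_i$ makes the local isomorphisms agree on overlaps, yielding a global isomorphism $\psi\colon M\to M'$.

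The main obstacle I expect is the careful construction of the equivariant local models and, especially, the verification that the transition automorphisms $\psi_j^{-1}\circ\psi_i$ really are \emph{just} multiplication by a $\T$-valued function — i.e., that an equivariant folded symplectomorphism of a local model that covers the identity on $U_{ij}$ and preserves the moment map must be of this form. In the symplectic (non-folded) setting this is classical, but near the fold one must check that no extra ``shearing'' in the $t$-direction of the normal form $d(t^2p^*\alpha)$ is possible; this amounts to showing the relevant automorphism group of the folded local model is exactly $C^\infty(U_{ij},\T)$, which should follow from Theorem~\ref{thmoneindSGW} by tracking how the normalizing diffeomorphism depends on the data and from the rigidity of the symplectic locus on either side of $Z$. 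A secondary technical point is handling the corner strata of $M/\T$ (where the isotropy jumps), but there the ordinary Delzant local model controls things and the fold, generically, is transverse to the corners, so the two normal forms can be combined.
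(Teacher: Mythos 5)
Your overall architecture (local normal forms, \v{C}ech gluing over $M/\T$, killing the obstruction with $H^2(M/\T;\Z)=0$) matches the paper's, but there is a genuine gap in your final step. After trivializing the cocycle $\{g_{ij}\}$ in the sheaf of \emph{all} smooth $\T$-valued functions and replacing $\psi_i$ by $\psi_i$ composed with translation by $g_i$, the corrected maps do agree on overlaps --- but translation by a non-constant $\T$-valued function of the base point is \emph{not} in general a folded symplectomorphism (its derivative picks up terms involving $dg_i$), so the glued map is only an equivariant diffeomorphism $g\colon M\to M'$ with $g^*\Phi'=\Phi$, not an isomorphism. In the purely symplectic setting one avoids this by working with the sheaf of \emph{Lagrangian} (closed-one-form) sections rather than all sections, which is why the classical argument you cite goes through there; you cannot simply import it here. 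The paper accepts the weaker conclusion of the gluing step and then closes the gap with a separate global Moser deformation (Theorem \ref{global}): it uses $H^2(M/\T;\Z)=0$ a second time to produce a \emph{basic} one-form $\beta$ with $d\beta=g^*\omega'-\omega$, and flows along $X_s$ defined by $\iota(X_s)\omega_s=-\beta$ to deform $g$ into an actual isomorphism.

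That Moser step is itself nontrivial near the fold and is entirely absent from your proposal: the forms $\omega_s=(1-s)\omega+sg^*\omega'$ are degenerate along $Z$, so the equation $\iota(X_s)\omega_s=-\beta$ is only solvable if $\beta$ annihilates $\ker(\omega_s)$ there. Arranging this requires showing that $\ker(\omega_s)\cap TZ=\ker(\omega)\cap TZ$ and that the transverse kernel directions differ only by vectors tangent to orbits (Lemma \ref{kerneltalk}), and then building $\beta$ by an equivariant homotopy operator on a tubular neighborhood of $Z$ so that $\beta$ vanishes on $Z$ in the relevant directions (Lemmas \ref{basichomotopyproperty} and \ref{basicform}). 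Your secondary worries (rigidity of automorphisms of the local model, corners meeting the fold) are legitimate but are handled by Haefliger--Salem and the slice theorem; the missing Moser argument along the degenerate locus is the substantive omission.
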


We will prove this theorem in two steps. The first establishes a condition under which folded symplectic toric manifolds are locally isomorphic. We then turn to questions of global isomorphisms. In the last section, we provide a proof of Theorem \ref{maintheorem} by passing our local results to the global setting.

\section{The Local Picture}\label{localsection} In this section, we establish the existence of local equivalence of folded symplectic toric four-manifolds given a diffeomorphism of orbit spaces that preserves the orbital moment maps.  The first step is understanding the differential structure of the orbit space. We then use that structure to help in determining when two folded symplectic toric four-manifolds are locally isomorphic, as defined below. 

\begin{definition}
Let $(M,\omega,\Phi)$ and $(M',\omega',\Phi')$ be folded symplectic toric manifolds. Denote by $\pi:M \to M/\T$ and $\pi':M' \to M'/\T$ the orbit maps. We say that $(M,\omega,\Phi)$ and $(M',\omega',\Phi')$ are {\bf locally isomorphic} if there exists a diffeomorphism $h:M/\T \to M'/\T$ such that for each $x \in M/\T$, there exists a neighborhood $U$ of $x$ and an isomorphism
$$
\psi_{x}:\pi^{-1}(U) \to (\pi')^{-1}(h(U))
$$
of (open) folded symplectic toric manifolds such that $(h \circ \pi)(p) = (\pi' \circ \psi_{x})(p)$ for every $p \in \pi^{-1}(U)$.  
\end{definition}

\begin{remark}\label{orbitpreservingismomentpreserving}
When two folded symplectic toric manifolds $(M,\omega,\Phi)$ and $(M',\omega',\Phi')$ are locally isomorphic, it can be convenient to identify their orbit spaces via a choice of diffeomorphism that lifts locally to isomorphisms. When this is done, we can write $\phi' = \underline{\Phi} \circ \pi'$ where $\pi':M' \to M/\T$ is the orbit projection and $\underline{\Phi}:M/\T \to \mathfrak{t}^*$ is the orbital moment map.  
\end{remark}

\subsection{The structure of the orbit space}  We certainly cannot expect the orbit space of a folded symplectic toric four manifold to be a manifold itself, but there is a differential structure on the orbit space that we may exploit. In particular, we will show that the orbit space is a {\bf manifold with corners}. Briefly, a manifold with corners is a second countable Hausdorff space with a maximal atlas of charts modeled on sectors $[0,\infty)^k \times \R^{n-k}$ with $0 \leq k \leq n$. The reader is referred to Appendix A of \cite{KL} for more details.  To establish such a differential structure on the orbit space, we proceed as follows. At points not in the fold, we may appeal to well-known results from symplectic geometry. At points in the fold, however, it is necessary to know the types of stabilizer subgroups that can appear. We begin by observing that, in the fold, orbits may be stabilized by a circle subgroup of the torus. 

\begin{example}\label{circlestabilizers}
Consider the product $S^2 \times S^2$, equipped with local coordinates $(h_1,\theta_1,h_2,\theta_2)$, $-1 \leq h_j \leq 1$.  This is a folded symplectic manifold with folded symplectic form $\omega = h_1dh_1 \wedge d\theta_1 + dh_2\wedge d\theta_2$. The fold is the copy of $S^1 \times S^2$ defined by $\{h_1=0\}$.  The folded symplectic form is invariant under the action of the two-torus given by
$$
(\lambda_1,\lambda_2)\cdot(h_1,\theta_1,h_2,\theta_2) = (h_1,\theta_1+\lambda_1,h_2,\theta_2+\lambda_2).
$$
In fact, this is a folded Hamiltonian action with moment map
$$
\Phi(h_1,\theta_1,h_2,\theta_2) = \left(\frac{1}{2}h_1^2,h_2\right).
$$
The circle $S^1$ stabilizes every point in $\Phi^{-1}(0,1)$.  
\end{example}

The other stabilizers that can appear in the fold are limited. In particular, the following result states that, at least in dimension four, no nontrivial discrete subgroup of the torus can stabilize points in the fold. This in turn implies that for a four-dimensional folded toric symplectic manifold, the set of free orbits is dense (Theorem 4.27 in \cite{Ka}). 

\begin{lemma}\label{nofixedpoints}\label{principalorbitsarefree}
Suppose a torus, $\T^n$, acts effectively on a connected, orientable manifold, $M^{2n}$, preserving a hypersurface $Z$. Then, there are no fixed points of the action in $Z$. Furthermore, if $M$ is four-dimensional, the only discrete subgroup of $\T^n$ stabilizing points in the fold is the trivial subgroup.
\end{lemma}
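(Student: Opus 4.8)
The plan is to work entirely with the linear isotropy representation at a point of $Z$, treating the two assertions separately.

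For the first assertion, suppose toward a contradiction that $z\in Z$ is fixed by $\T^n$. Linearizing the action near $z$ (Bochner), effectiveness forces the isotropy weights $\alpha_1,\dots,\alpha_k\in\mathfrak t^*$ on $T_zM$ to span $\mathfrak t^*$: a common kernel would be the Lie algebra of a subtorus acting trivially on a neighborhood of $z$, hence on all of $M$ by connectedness, contradicting effectiveness. Comparing dimensions ($2k\le 2n$ and $k\ge n$) gives $k=n$, so the $\alpha_j$ form a basis of $\mathfrak t^*$; in particular they are nonzero and pairwise distinct, so $T_zM\cong\bigoplus_{j=1}^{n}\C_{\alpha_j}$ is multiplicity-free. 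The $\T^n$-invariant real subspaces of such a representation are precisely the partial sums $\bigoplus_{j\in S}\C_{\alpha_j}$, all of even real dimension. But $z$ being fixed and $Z$ being invariant makes $T_zZ\subseteq T_zM$ a $\T^n$-invariant subspace of real dimension $2n-1$, which is odd — a contradiction. Hence $Z$ contains no fixed points.

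For the second assertion take $n=2$, let $z\in Z$, and let $G:=\stab(z)$ be discrete, hence finite. By the first assertion $G\neq\T^2$, so the orbit $\T^2\cdot z$ is two-dimensional, and it lies in $Z$ since $Z$ is invariant. By the slice theorem a neighborhood of the orbit is equivariantly diffeomorphic to $\T^2\times_G W$, where $W:=T_zM/T_z(\T^2\cdot z)$ is the two-dimensional slice. An element of $\T^2$ acting trivially on this open set acts trivially on all of $M$ (its fixed set is a closed submanifold containing an open set, hence all of the connected manifold $M$), so effectiveness forces $G\hookrightarrow GL(W)$. Here I would use that $Z$ is the fold: the normal line bundle $\nu(Z)=TM|_Z/TZ$ is isomorphic to $\ker(\omega)/(\ker(\omega)\cap TZ)$, and the action preserves the $\T^2$-equivariant orientations of $\ker(\omega)$ and of $\ker(\omega)\cap TZ$ described above, so $G$ acts trivially on the line $\nu_z(Z)=T_zM/T_zZ$. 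Since $T_z(\T^2\cdot z)\subseteq T_zZ$ and $G$ is finite, $W$ decomposes as a $G$-representation into the line $T_zZ/T_z(\T^2\cdot z)$, on which $G$ acts through a character $\chi:G\to\{\pm1\}$, and a line isomorphic to $\nu_z(Z)$ carrying the trivial action. Thus $G$ acts on $W\cong\R^2$ as $g\mapsto\mathrm{diag}(\chi(g),1)$; faithfulness forces $\chi$ injective, so $|G|\le 2$, and if $|G|=2$ then $G$ acts on $W$ by an orientation-reversing reflection, making $\T^2\times_G W$ — an open subset of $M$ — non-orientable, contradicting orientability. Hence $G$ is trivial.

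The step I expect to be the main obstacle is precisely the exclusion of a $\Z/2$-stabilizer in the second assertion. The tempting shortcut — "$G$ is finite, orientability of $\T^2\times_G W$ puts $G$ into $SO(2)$, a nontrivial subgroup of $SO(2)$ preserves no line, yet $G$ must preserve the line $T_zZ/T_z(\T^2\cdot z)$" — fails because $-\mathrm{id}\in SO(2)$ preserves every line. One genuinely needs the extra input that the normal direction to $Z$ is $G$-fixed, i.e.\ that $Z$ carries a $\T^2$-invariant coorientation, which is exactly what the folded symplectic structure supplies via the equivariant orientation of $\ker(\omega)$. So care is needed to keep track of which hypotheses are actually used: the conclusion genuinely fails for a merely invariant hypersurface in an orientable four-manifold, as shown by $\T^2\times_{\Z/2}\R^2$, where $\Z/2\subset\T^2$ is the order-two diagonal subgroup acting by $-\mathrm{id}$ on $\R^2$ and $Z=\T^2\times_{\Z/2}\ell$ for $\ell$ a line through the origin is an invariant hypersurface along which the stabilizer is $\Z/2$.
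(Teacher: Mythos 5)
Your proof is correct, and on the second assertion it is actually more careful than the paper's own argument. For the nonexistence of fixed points, the paper uses the slice theorem to get a faithful representation $\T^n \hookrightarrow O(2n-1)\times O(1)$, lands in $SO(2n-1)$ by connectedness, and contradicts the fact that a maximal torus there has rank $n-1$; your version (effectiveness forces the isotropy weights at a fixed point to be a basis of $\mathfrak{t}^*$, so every invariant subspace of $T_zM$ is even-dimensional, while $T_zZ$ is an invariant subspace of odd dimension) is the same obstruction expressed through the weight decomposition, and either route is fine. For the discrete-stabilizer claim, the paper writes $U=\T\times_{H}(\R\times\R)$ and asserts ``Since $M$ is orientable, so are $U$ and $Z$,'' from which $H\hookrightarrow SO(1)\times SO(1)$ follows. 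You correctly observe that orientability of $Z$ does \emph{not} follow from orientability of $M$ for an arbitrary invariant hypersurface --- your $\T^2\times_{\Z/2}\R^2$ example, with $Z=\T^2\times_{\Z/2}\ell$ carrying a $\Z/2$ stabilizer, shows the second conclusion genuinely fails at that level of generality --- and you supply the input the paper leaves implicit: the fold of a folded symplectic form on an oriented manifold is coorientable (via $\omega^n$, equivalently via the orientations of $\ker(\omega)$ and $\ker(\omega)\cap TZ$), the coorientation is preserved by the connected torus, so the finite stabilizer acts trivially on the normal line to $Z$ and is then eliminated by orientability of $M$ alone. This matches how the lemma is actually used in the paper (always applied to folds), and your remark that ``preserving a hypersurface'' is an insufficient hypothesis for the second assertion is a genuine and worthwhile correction to the statement and proof as written.
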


\begin{proof}
By the Slice Theorem, there is a neighborhood $U$ of an orbit in $M$ that is equivariantly diffeomorphic to $\T \times_{G_p} \mathcal{V}_{p}$ where $p$ is any point in the orbit, $\mathcal{V}_p$ is the differential slice at $p$, and $G_p$ is the stabilizer of the point $p$. Since the torus acts effectively on $M$, it acts effectively on $U$, hence on $\T\times_{G_p}\mathcal{V}_p$. Since the torus is Abelian, the isotropy action of $G_p$ on $\mathcal{V}_p$ is effective. If $p$ is a fixed point, then $G_p=\T$. So, if $p\in Z$, by utilizing a choice of invariant metric, we get a faithful representation
$$
\T^n \hookrightarrow O(2n-1) \times O(1)
$$
since $Z$ is an invariant hypersurface. The torus is connected, and so the image lies in $SO(2n-1)$. Since the maximal torus in $SO(2n-1)$ has dimension $n-1$, we have arrived at a contradiction. Hence $p$ cannot be a fixed point in $Z$. 

Now suppose that $\dim{M}=4$ and let $H$ be a  nontrivial discrete subgroup of $\T$ stabilizing a point $p \in Z$. Again by the Slice Theorem, there is a neighborhood of an orbit through $p$ is equivariantly diffeomorphic to $U=\T \times_{H} (\R \times \R)$. Since $M$ is orientable, so are $U$ and $Z$. We then have a faithful representation $H \hookrightarrow SO(1) \times SO(1)$ and conclude that $H$ is trivial. \end{proof}

We are now in a position to prove the following. 

\begin{prop}\label{orbitspaceisamanifoldwithcorners}
Let $(M,\omega,\Phi)$ be an orientable, folded symplectic toric four-manifold. Then, the orbit space $M/\T$ is a smooth surface with corners, wherein the fold $Z$ descends to a one-dimensional submanifold with boundary. 
\end{prop}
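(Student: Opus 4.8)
The plan is to produce charts on $M/\T$ modeled on sectors $[0,\infty)^k\times\R^{2-k}$ by working orbit-by-orbit through the Slice Theorem, using Lemma \ref{nofixedpoints} to pin down exactly which stabilizers occur. For an orbit through a point $p$, the Slice Theorem gives an invariant neighborhood equivariantly diffeomorphic to $\T\times_{G_p}\mathcal{V}_p$, so $\pi^{-1}(U)/\T\cong\mathcal{V}_p/G_p$, and it suffices to show each such quotient is a sector of dimension two. I would split into cases by $G_p$: first, points not in the fold $Z$; second, points in $Z$.

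For $p\notin Z$, the two-form $\omega$ is symplectic near $p$, and the usual symplectic classification of torus stabilizers applies: $G_p$ is a subtorus $\T^j$ of $\T^2$ acting on the symplectic slice $\mathcal{V}_p\cong\C^j\times(\text{trivial})$ as a subtorus of $U(j)$ with weights forming a basis of the weight lattice, so $\mathcal{V}_p/G_p\cong[0,\infty)^j\times\R^{2-j}$ by the standard moment-map coordinates $|z_i|^2$. Thus near symplectic orbits $M/\T$ is locally a corner of codimension $j\in\{0,1,2\}$, a free orbit giving an interior ($\R^2$) point, a circle stabilizer giving a boundary ray, and a $\T^2$-fixed point giving a vertex. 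For $p\in Z$, Lemma \ref{nofixedpoints} says $G_p$ is either trivial or a circle $\T^1$ (no discrete piece, no fixed point). If $G_p$ is trivial the orbit is free and a neighborhood of $\pi(p)$ in $M/\T$ is just $\mathcal{V}_p\cong\R^2$; the descended fold $\underline Z$ appears inside as the zero set of the function cutting out $Z$, which I must check is a smooth one-manifold. If $G_p=\T^1$, then (as in Example \ref{circlestabilizers}) the slice is $\R^2$ with $\T^1$ acting by rotation in one $\R$-factor fixing the $Z$-direction, so $\mathcal{V}_p/\T^1\cong[0,\infty)\times\R$, a half-plane whose boundary ray is exactly the image of the fixed-point set — which, being the locus where the stabilizer jumps, lies in $\underline Z$; this is the mechanism by which $\underline Z$ becomes a manifold with boundary rather than a closed curve.

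To make the fold claim precise I would use Theorem \ref{thmoneindSGW}: near $Z$ one has coordinates $Z\times\R$ with $\omega=p^*i^*\omega+d(t^2 p^*\alpha)$, and the $\T$-action can be taken to preserve this normal form (the fold is invariant, and one can average), so $Z$ is locally $\{t=0\}$ with the torus acting on the $Z$-factor. Then $\underline Z=Z/\T$, and the local model above shows $Z/\T$ is a one-manifold, with boundary precisely where a circle stabilizer occurs and interior where the action on $Z$ is locally free. Patching the charts: the transition maps between overlapping slice neighborhoods are smooth maps of sectors because they are induced by equivariant diffeomorphisms upstairs descending to the quotients (this is where one invokes that smooth invariant functions on $\mathcal{V}_p$ are smooth functions of the moment coordinates $|z_i|^2$, a Schwarz-type lemma), and second countability and Hausdorffness are inherited from $M$ compact.

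The main obstacle is the $p\in Z$ analysis with a circle stabilizer: one must confirm that the circle really does act trivially on the normal $\R$-direction to $Z$ inside the slice, so that the slice is $[0,\infty)\times\R$ and not some other two-dimensional $\T^1$-space, and simultaneously that $\underline Z$ meets this chart as the boundary $\{0\}\times\R$ and not transversally through the interior. Establishing this requires combining the orientability constraint and the moment-map condition $\iota(\xi_M)\omega=-d\Phi^\xi$ to see that the circle direction and the $t$-direction of Theorem \ref{thmoneindSGW} are independent, which I expect to be the delicate computational point; everything else is a routine assembly of the Slice Theorem with the known symplectic local normal forms.
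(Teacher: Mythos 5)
Your main line of argument --- Slice Theorem plus Lemma \ref{nofixedpoints} to enumerate stabilizers, the symplectic local normal forms off the fold, and the identification of $\mathcal{V}_p/G_p$ with two-dimensional sectors --- is exactly the paper's proof of the ``surface with corners'' half of the Proposition. The problem is in your description of how $\underline{Z}$ sits inside the half-plane chart at a circle-stabilized fold orbit, which you yourself single out as the delicate point: you assert that $\underline{Z}$ should appear as the boundary ray $\{0\}\times\R$ (the image of the fixed-point set of the circle) and that the alternative --- $\underline{Z}$ running transversally into the interior --- is what must be ruled out. The opposite is true. At such a point the orbit is one-dimensional, so the slice is three-dimensional, $\mathcal{V}_p\cong\R^2\times\R$ (not ``$\R^2$'' as you write); the circle acts linearly and effectively, hence rotates a unique $2$-plane $P$ and fixes the complementary line $L$. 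Since $Z$ is invariant, $T_p(Z\cap\mathcal{V}_p)$ is a two-dimensional invariant subspace of this representation, and the only such subspace is $P$. So the circle rotates \emph{within} $Z$, the fixed line $L$ is transverse to $Z$, and therefore $\underline{Z}$ is the image $P/S^1=[0,\infty)\times\{0\}$: a ray emanating from the boundary point $\pi(p)$ into the interior of the chart $[0,\infty)\times\R$. The boundary $\{0\}\times\R$ is the image of $L$ and is \emph{not} contained in $\underline{Z}$.

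This is visible already in Example \ref{circlestabilizers}: the orbit space is the square with coordinates $(h_1,h_2)$, and the fold descends to the segment $\{h_1=0\}$, which crosses the square transversally to the edges $\{h_2=\pm 1\}$, meeting them at its two endpoints. Note that your proposed picture would not even deliver the stated conclusion: if $\underline{Z}$ were locally the whole boundary line $\{0\}\times\R$, it would have no boundary points, whereas the correct local model exhibits $\underline{Z}$ as a $1$-manifold whose boundary consists precisely of the images of the circle-stabilized orbits in $Z$ --- that is the actual mechanism behind the second assertion. With this correction (which requires only the invariant-subspace observation above, not the normal form of Theorem \ref{thmoneindSGW}), the rest of your argument goes through; the paper's own proof writes out only the surface-with-corners half and leaves the statement about $\underline{Z}$ implicit in the local models.
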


\begin{proof}
On the components of the symplectic locus $M\setminus{Z}$ the result follows by the local normal form theorem for symplectic toric manifolds (Proposition 1.1 in \cite{KL}). On the fold $Z$, Lemma \ref{nofixedpoints} and Example \ref{circlestabilizers} assert that the only possible stabilizers are $S^1$ and the trivial subgroup. A neighborhood of a point in the orbit space with trivial stabilizer is homeomorphic to an open two-disk. If the orbit corresponding to a point in $Z/T$ has a circle stabilizer, then it has a neighborhood homeomorphic to $\R^2/S^1\times \R = [0,\infty) \times \R$. 
\end{proof}

\subsection{Local equivalence of folded symplectic toric manifolds}
Local models for neighborhoods of orbits through points in a folded symplectic toric manifold $(M,\omega,\Phi)$ depend necessarily on whether the point lies in the symplectic locus $M\setminus{Z}$ or in the fold $Z$. Knowing the image of the moment map is not enough to specify the local structure of an arbitrary folded symplectic toric manifold; we must also incorporate information about the orbital moment map. Critical to this is the fact that orbits are isotropic. Recall that a submanifold $L$ of a manifold $N$ equipped with a two-form $\Omega$ is {\bf isotropic} if $\Omega|_{L}$ vanishes. 

\begin{prop}\label{orbitsareisotropic}
Orbits in a folded Hamiltonian $\T$-manifold $(M,\omega,\Phi)$ are isotropic. 
\end{prop}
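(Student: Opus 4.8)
The plan is to reduce the statement to a pointwise computation with the moment map equation, exactly as in the ordinary symplectic case; crucially, nondegeneracy of $\omega$ is never used, so the same argument applies verbatim along the fold $Z$ as well as on the symplectic locus. Fix $p \in M$ and recall that the tangent space to the orbit $\T\cdot p$ at $p$ is spanned by the values $\xi_{M}(p)$ of the fundamental vector fields, $\xi \in \mathfrak{t}$. Thus it suffices to show that $\omega_{p}(\xi_{M}(p),\eta_{M}(p)) = 0$ for all $\xi,\eta \in \mathfrak{t}$.

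First I would rewrite $\omega_{p}(\xi_{M},\eta_{M}) = \bigl(\iota_{\eta_{M}}\iota_{\xi_{M}}\omega\bigr)(p)$ and apply the defining relation $\iota_{\xi_{M}}\omega = -d\Phi^{\xi}$ of the moment map, obtaining $\omega_{p}(\xi_{M},\eta_{M}) = -\bigl(d\Phi^{\xi}\bigr)(\eta_{M})(p) = -\bigl(\mathcal{L}_{\eta_{M}}\Phi^{\xi}\bigr)(p)$, the derivative of the function $\Phi^{\xi} = \langle \Phi,\xi\rangle$ along the flow generated by $\eta$. Next I would observe that $\Phi$ is $\T$-invariant: since $\T$ is abelian, its coadjoint action on $\mathfrak{t}^{*}$ is trivial, so the equivariance of $\Phi$ says precisely that $\Phi\circ g = \Phi$ for every $g \in \T$. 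Hence each component $\Phi^{\xi}$ is constant along $\T$-orbits, so $\mathcal{L}_{\eta_{M}}\Phi^{\xi} \equiv 0$. Combining the two steps gives $\omega_{p}(\xi_{M}(p),\eta_{M}(p)) = 0$ for all $\xi,\eta$ and all $p$, i.e., $\omega$ restricted to any orbit vanishes, which is the assertion.

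There is essentially no serious obstacle here; the only points requiring (minor) care are the sign and interior-product bookkeeping and the remark — already used implicitly in the construction of the orbital moment map $\underline{\Phi}$ — that equivariance together with abelianness forces $\Phi$ to be invariant. I would emphasize explicitly that the conclusion covers orbits contained in the fold $Z$, where $\omega$ fails to be symplectic, since this is precisely the case that will be needed when building local models along $Z$ in the next subsection.
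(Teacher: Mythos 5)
Your argument is correct and is essentially identical to the paper's proof: both use that abelianness makes $\Phi$ invariant, hence constant on orbits, and then apply the moment map identity $\iota(\xi_{M})\omega = -d\Phi^{\xi}$ to conclude $\omega_{p}(\xi_{M}(p),\eta_{M}(p))=0$. Nothing further is needed.
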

\begin{proof}
Since $\T$ is Abelian, the moment map $\Phi:M \to \mathfrak{t}^*$ is invariant and hence constant on orbits. Therefore, for any $\xi,\zeta \in \mathfrak{t}$ and $p \in M$,  
\begin{eqnarray*}
0 &=& \iota(\xi_{_M}(p))d\Phi_{p}^{\zeta} \\
&=& \omega_{p}(\xi_{_M}(p),\zeta_{_M}(p)).
\end{eqnarray*}\end{proof}

\begin{lemma}\label{Vtangenttoorbits}
Let $(M,\omega,\Phi)$ be an orientable, folded symplectic toric four-manifold. If $V$ is a section of $\ker(\omega) \cap TZ$, then $V$ is tangent to orbits in $Z$.
\end{lemma}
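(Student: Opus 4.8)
\emph{Proof proposal.} The plan is to verify the claim one orbit at a time: fix $z\in Z$ and show that $V_z$ lies in $T_z(\T\cdot z)=\mathfrak{t}\cdot z$. If $V_z=0$ there is nothing to prove, so assume $V_z\neq 0$. The first thing I would record is that $\ker(\omega_z|_{T_zZ})=\R V_z$: since $n=2$, the folded condition forces $i^*\omega=i^*\omega^{n-1}$ to be nowhere zero on $Z$, so $\omega_z|_{T_zZ}$ is a nonzero alternating form on the three-dimensional space $T_zZ$, hence has rank two and a one-dimensional kernel, and that kernel contains the fiber $\bigl(\ker(\omega)\cap TZ\bigr)_z=\R V_z$. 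So it suffices to locate $\mathfrak{t}\cdot z$ relative to this line. By Lemma \ref{nofixedpoints} together with Example \ref{circlestabilizers}, an orbit meeting $Z$ is either free or has a circle stabilizer, and I would handle the two cases separately; in both, I use that $Z$ is $\T$-invariant, so $\mathfrak{t}\cdot z\subseteq T_zZ$.

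\emph{Free orbits.} Here $\mathfrak{t}\cdot z$ is two-dimensional and, by Proposition \ref{orbitsareisotropic}, isotropic for $\omega_z$. Projecting to the two-dimensional symplectic vector space $T_zM/\ker(\omega)_z$, its image is isotropic, hence at most one-dimensional, so $\mathfrak{t}\cdot z\cap\ker(\omega)_z$ has dimension at least one. Since $\mathfrak{t}\cdot z\subseteq T_zZ$, this intersection lies inside $\ker(\omega)_z\cap T_zZ=\R V_z$; comparing dimensions gives $\R V_z\subseteq\mathfrak{t}\cdot z$, as wanted.

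\emph{Circle stabilizers.} Suppose $G_z\cong S^1$ has Lie algebra $\mathfrak{s}\subset\mathfrak{t}$, and choose $\zeta\in\mathfrak{t}\setminus\mathfrak{s}$, so that $\mathfrak{t}\cdot z=\R\,\zeta_M(z)$ with $\zeta_M(z)\neq 0$. The goal reduces to showing $\zeta_M(z)\in\ker(\omega_z|_{T_zZ})=\R V_z$, i.e.\ that $\iota_{\zeta_M(z)}\omega_z=-(d\Phi^{\zeta})_z$ vanishes on $T_zZ$. For this I would apply the Slice Theorem to the $\T$-action on $Z$ at $z$: a neighborhood of the one-dimensional orbit in $Z$ is equivariantly diffeomorphic to $\T\times_{G_z}\mathcal{V}$ with $\mathcal{V}$ two-dimensional, on which $G_z$ acts linearly; this action is effective because $\T$ is abelian, exactly as in the proof of Lemma \ref{nofixedpoints} (orientability of $Z$ forces $G_z$ to act trivially on the normal line to $Z$), hence it is a rotation action with $z$ as its only fixed point. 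Since $\T$ is abelian, $\Phi^{\zeta}$ is $\T$-invariant, so $\Phi^{\zeta}|_Z$ is determined by a $G_z$-invariant function on $\mathcal{V}$; its differential at $z$ is a rotation-invariant covector on the slice and hence vanishes, while $\Phi^{\zeta}$ is constant along the orbit. Together these show $(d\Phi^{\zeta})_z$ kills $T_zZ$, and then $\zeta_M(z)\in\ker(\omega_z|_{T_zZ})=\R V_z$ forces $V_z\in\R\,\zeta_M(z)=\mathfrak{t}\cdot z$.

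The circle-stabilizer case is the real obstacle; the free case is linear algebra at a point once isotropy of orbits is available. Two points I would check carefully: that the bundle $\ker(\omega)\cap TZ$ has one-dimensional fibers even over circle-stabilizer orbits (this is the rank-two claim above, needing only the folded condition and $\dim M=4$), and that the isotropy action on the slice of the action on $Z$ --- not on $M$ --- remains effective. Example \ref{circlestabilizers} is the picture to keep in mind: at a point of $\Phi^{-1}(0,1)$ one has $\mathfrak{t}\cdot z=\ker(\omega)_z\cap T_zZ=\R\,\partial_{\theta_1}$, and the relevant moment-map component is $\frac{1}{2}h_1^2$, which is indeed critical along the entire fold.
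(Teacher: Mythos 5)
Your proof is correct, but the way you handle the singular orbits is genuinely different from the argument in the paper, and it is worth recording the contrast. For free orbits the two arguments are essentially the same: the paper writes $i^*\omega=(f_1d\theta_1+f_2d\theta_2)\wedge ds$ in slice coordinates and reads off that the kernel is tangent to the orbit, while you phrase the same dimension count invariantly (a two-dimensional isotropic subspace of $T_zM$ must meet the two-dimensional kernel $\ker(\omega)_z$, and inside $T_zZ$ that forces $\R V_z\subseteq\mathfrak{t}\cdot z$). For orbits with circle stabilizer the paper argues globally and dynamically: it first shows singular orbits are isolated in $Z$ (via Proposition \ref{orbitspaceisamanifoldwithcorners}), then observes that if $V$ were not tangent to a singular orbit its flow would carry a point into a free orbit and, since $V$ is tangent to free orbits and these are compact, could never return --- contradicting reversibility of the flow. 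You instead argue locally at the point $z$: the isotropy representation of $G_z\cong S^1$ on the two-dimensional slice of $Z$ is an effective rotation (orientability plus connectedness killing the normal $O(1)$ factor, exactly as in Lemma \ref{nofixedpoints}), so the differential of the invariant function $\Phi^{\zeta}$ has no nonzero invariant component on the slice; hence $(d\Phi^{\zeta})_z$ annihilates $T_zZ$, $\zeta_M(z)$ lies in the one-dimensional kernel of $i^*\omega$ at $z$, and that kernel is therefore the orbit direction. Your route buys a purely pointwise argument that needs neither the isolatedness of singular orbits nor the density of free orbits, and it yields the slightly stronger conclusion that at a circle-stabilizer point the line $\ker(\omega)\cap TZ$ \emph{equals} $\mathfrak{t}\cdot z$; the paper's flow argument is shorter to state but leans on the global structure of $Z/\T$ already established in Proposition \ref{orbitspaceisamanifoldwithcorners}. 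Both proofs rely on the same preliminary classification of stabilizers in the fold and on Proposition \ref{orbitsareisotropic}, and both are valid.
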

\begin{proof}
We first consider a neighborhood of a free orbit $\T \times \R$ in $Z$ with coordinates $(\theta_1,\theta_2,s,0)$. By Proposition \ref{orbitsareisotropic}, orbits are isotropic so the restriction of the folded symplectic form to this orbit is
\begin{eqnarray*}
i^*\omega &=& f_1d\theta_1\wedge{ds}+f_2d\theta_2\wedge{ds} \\ 
&=& (f_1d\theta_1+f_2d\theta_2)\wedge{ds}
\end{eqnarray*}
where $f_1$ and $f_2$ are invariant functions. It follows that if $V$ is in the kernel of $i^*\omega$, then  $V$ is tangent to the orbit. 

Now we consider singular orbits. By Proposition \ref{orbitspaceisamanifoldwithcorners}, the orbit space $Z/\T$ is a compact one-manifold whose boundary components are images of singular orbits under the orbit map. Since the boundary of a compact one-manifold consists of isolated points and the orbit map is continuous,  singular orbits are isolated in $Z$. Pick a singular orbit $\mathcal{O}$ in $Z$ and suppose $V$ is a section of $\ker(\omega) \cap TZ$ that is not tangent to $\mathcal{O}$. Since the orbit is isolated, there exists a neighborhood $U$ of $\mathcal{O}$ whose intersection with the set of singular orbits contains only $\mathcal{O}$. Let $\phi_{t}$ denote the flow of $V$. Then, since $V$ was assumed not to be tangent to $\mathcal{O}$ and free orbits are dense by Lemma \ref{principalorbitsarefree},  if $x \in \mathcal{O}$ there exists $\epsilon >0$ such that  $\phi_{\epsilon}(x) \not\in \mathcal{O}$ and $\phi_{\epsilon}(x)$ lies in a free orbit $\mathcal{F}$. Since $V$ is tangent to free orbits and $\T^2$ is compact, $\phi_{t}(x) \in \mathcal{F}$ for all $t \in \R$. This contradicts the fact that $\phi_{-\epsilon} \in \mathcal{O}$. Hence, $V$ is tangent to the singular orbit $\mathcal{O}$.   \end{proof}

In order to get local isomorphisms of folded symplectic toric manifolds, we need a diffeomorphism of orbit spaces that lifts locally to isomorphisms. We now show that any diffeomorphism of orbit spaces that preserves the orbital moment maps also preserves orbital folds.  

\begin{lemma}\label{orbitdiffeospreserveZ}
Let $(M,\omega,\Phi)$ and $(M',\omega',\Phi')$ be orientable, folded symplectic toric four-manifolds. Suppose $h:M/\T \to M'/\T$ is a diffeomorphism such that $h^*\underline{\Phi}' = \underline{\Phi}$. Then, $h(Z/\T) = Z'/\T$. 
\end{lemma}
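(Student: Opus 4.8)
The plan is to characterize the image $Z/\T$ of the fold intrinsically inside the orbit space, in terms of the orbital moment map, in a manner that is manifestly preserved by any orbit-space diffeomorphism intertwining orbital moment maps. Concretely, I claim
\[
Z/\T=\bigl\{\,x\in M/\T\ :\ d\underline{\Phi}_{x}\colon T_{x}(M/\T)\to\mathfrak{t}^{*}\ \text{fails to be injective}\,\bigr\},
\]
where $T_{x}(M/\T)\cong\R^{2}$ is the full tangent space (even at boundary and corner points of the manifold-with-corners $M/\T$) and $d\underline{\Phi}_{x}$ is the ordinary differential of the smooth map $\underline{\Phi}$; and likewise for $(M',\omega',\Phi')$. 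Granting this, the lemma is immediate: from $\underline{\Phi}=\underline{\Phi}'\circ h$ the chain rule gives $d\underline{\Phi}_{x}=d\underline{\Phi}'_{h(x)}\circ dh_{x}$ with $dh_{x}$ a linear isomorphism (as $h$ is a diffeomorphism of manifolds with corners), so $d\underline{\Phi}_{x}$ is injective if and only if $d\underline{\Phi}'_{h(x)}$ is; therefore $h$ carries the non-injectivity locus of $\underline{\Phi}$ onto that of $\underline{\Phi}'$, that is, $h(Z/\T)=Z'/\T$.

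To prove the claimed characterization I treat the two inclusions separately. For ``$\supseteq$'' I show that $d\underline{\Phi}$ is injective at every point of the symplectic locus: near any orbit in $M\setminus Z$, a neighborhood of that orbit with the restricted structure is an (open) symplectic toric manifold, so by the local normal form for symplectic toric manifolds (Proposition~1.1 in \cite{KL}) the orbital moment map is, in suitable charts, a diffeomorphism onto a relatively open subset of a rational polytope; in particular $d\underline{\Phi}_{x}$ is a linear isomorphism for every $x\in(M\setminus Z)/\T$. Since the non-injectivity locus is closed (locally it is cut out by the vanishing of the Jacobian determinant of $\underline{\Phi}$ in a chart) and disjoint from the open set $(M\setminus Z)/\T$, it is contained in $Z/\T$.

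For ``$\subseteq$'' I work at a free orbit $\mathcal{O}\subseteq Z$ --- equivalently, at an interior point $x$ of the compact one-manifold-with-boundary $Z/\T$, recalling from the proof of Lemma~\ref{Vtangenttoorbits} that the non-free orbits in $Z$ are exactly the (isolated) boundary points of $Z/\T$. Let $V$ be a local section of $\ker(\omega)\cap TZ$; it is nowhere zero and, by Lemma~\ref{Vtangenttoorbits}, tangent to $\mathcal{O}$, so fixing $p\in\mathcal{O}$ we may pick $\xi\in\mathfrak{t}$, necessarily nonzero, with $\xi_{M}(p)=V(p)$. Since $V(p)\in\ker(\omega)_{p}$, the moment map relation gives $d\Phi^{\xi}_{p}=-\iota(\xi_{M}(p))\omega_{p}=0$; because $\pi$ is a submersion near $\mathcal{O}$ and $\Phi^{\xi}=\underline{\Phi}^{\xi}\circ\pi$, this forces $\langle d\underline{\Phi}_{x}(v),\xi\rangle=0$ for all $v\in T_{x}(M/\T)$. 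Hence the image of $d\underline{\Phi}_{x}$ lies in the annihilator $\xi^{\circ}=\{\eta\in\mathfrak{t}^{*}:\langle\eta,\xi\rangle=0\}$, which is a line in the two-dimensional space $\mathfrak{t}^{*}$, so $d\underline{\Phi}_{x}$ is not injective. Thus $\mathrm{int}(Z/\T)$ lies in the non-injectivity locus, and since that locus is closed in $M/\T$ while $Z/\T$ is the closure of its interior, all of $Z/\T$ lies there, completing the characterization.

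The step I expect to be the main obstacle is the inclusion ``$\subseteq$'', since $M/\T$ is only a manifold with corners and $\pi$ fails to be a submersion over the circle-stabilized orbits, so $d\underline{\Phi}$ needs care precisely at the boundary points of $Z/\T$. I sidestep this with the density/closure argument above --- free orbits are dense in $Z$ by Lemma~\ref{principalorbitsarefree} and the non-injectivity locus is closed --- so the only genuine computation happens at free orbits, where $M/\T$ is an ordinary surface and $\underline{\Phi}$ an ordinary smooth map. A direct verification at the circle-stabilized boundary orbits is also feasible, by feeding the folded normal form of Theorem~\ref{thmoneindSGW} (with Lemma~\ref{Vtangenttoorbits} locating the fold direction inside the torus) through the slice model $\T\times_{S^{1}}(\R\times\R)$ and computing $\underline{\Phi}$ explicitly, but it is more tedious, which is why I prefer the closure argument.
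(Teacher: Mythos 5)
Your proof is correct, and it runs on the same engine as the paper's --- both detect the fold through a drop in the rank of the moment map, traced back to Lemma \ref{Vtangenttoorbits}, which places the line $\ker(\omega)\cap TZ$ inside the tangent spaces to the orbits --- but the packaging is genuinely different. The paper works upstairs with $d\Phi_p$ on $M$: it identifies $\im(d\Phi_p)$ with the annihilator of the kernel of $F(\xi)=\iota(\xi_{_M})\omega_p$, characterizes fold points by $\dim\im(d\Phi_p)=\dim\mathfrak{t}_p^{\circ}-1$, and consequently must also argue that $h$ preserves stabilizers (read off from the corner structure of the orbit spaces) before the criterion can be transported. You instead phrase the criterion entirely downstairs: $Z/\T$ is exactly the locus where $d\underline{\Phi}$ fails to be injective on the full two-dimensional tangent space of the surface with corners. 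That criterion is stabilizer-free, so the chain rule finishes the lemma in one line; the price is that you must know $d\underline{\Phi}$ is injective at \emph{every} point of the symplectic locus, including edge and corner points, which you correctly extract from the symplectic local normal form, and you must handle the circle-stabilized fold orbits by a density-plus-closedness argument rather than a direct computation (both of which are sound: the non-injectivity locus is cut out by the vanishing of a Jacobian determinant that is continuous up to the boundary, and $Z/\T$, being a compact one-manifold with boundary, is the closure of its set of interior points, which are exactly the free fold orbits). Your formulation is arguably more robust at singular orbits, since the identification $\im(d\Phi_p)=\im(d\underline{\Phi}_x)$ used at the end of the paper's proof is only transparent where $\pi$ is a submersion, whereas your argument never leaves the orbit space. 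Both proofs ultimately rest on the same two inputs: the symplectic local normal form off the fold, and Lemma \ref{Vtangenttoorbits} on it.
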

\begin{proof}
Choose $p \in \pi^{-1}(x)$ and consider
\begin{eqnarray*}
F:\mathfrak{t} &\to& T^*_pM \\
\xi &\mapsto& \iota(\xi_{_M})\omega_{p}.
\end{eqnarray*}
This map is the transpose of 
$$
d\phi_{p}:T_pM \to\mathfrak{t}^*
$$
since $\langle d\Phi(Y),\xi \rangle = \omega(\xi_{_M},Y)$ for any vector field $Y$ on $M$. It follows that the image of $d\Phi_p$ is isomorphic to the annihilator in $\mathfrak{t}^* $ of the kernel of $F$. The kernel of $F$ consists of vectors in the stabilizer algebra $\mathfrak{t}_{p}$ as well as any $\xi \in \mathfrak{g}$ with $\xi_{_M}(p) \in \ker(\omega)_p$. By Lemma \ref{Vtangenttoorbits}, any nonzero vector spanning $\ker(\omega)_p \cap T_pZ$ lies in the image of the map $\mathfrak{g} \to T_pM$ given by $\xi \mapsto \xi_{_M}(p)$.  Hence, $p$ is a fold point if and only if $\dim(\im(d\Phi_p)) = \dim(\mathfrak{t}^{\circ}_p)-1$ where $\mathfrak{t}^{\circ}_{p}$ is the annihilator of $\mathfrak{t}_p$. 

Let $x \in M/T$. Denote by $\textrm{stab}(x)$ the stabilizer of any point $p \in \pi^{-1}(x)$ where $\pi:M \to M/\T$ is the orbit projection. Since $h:M/\T \to M'/\T$ is a diffeomorphism, the proof of Proposition \ref{orbitspaceisamanifoldwithcorners} implies that $\textrm{stab}(x) = \textrm{stab}(h(x))$. This in turn implies that $\mathfrak{t}^{\circ}_{p} = \mathfrak{t}^{\circ}_{p'}$ for any $p' \in (\pi')^{-1}(h(x))$. The result then follows since $\im(d\Phi) = \im(d\underline{\Phi})$.  \end{proof}

The next result states that moment preserving diffeomorphisms also preserve restriction of the folded symplectic form to the fold. 

\begin{lemma}\label{lemmarestriction}
Let $(M,\omega,\Phi)$ and $(M',\omega',\Phi')$ be orientable, folded symplectic toric four-manifolds. Suppose $g:M \to M'$ is an equivariant diffeomorphism such that $g^*\Phi' = \Phi$. Then, $i^*\omega = i^*(g^*\omega')$ where $i:Z \hookrightarrow M$ is the inclusion of the fold.
\end{lemma}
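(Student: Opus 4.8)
The plan is: first show that $g$ maps the fold $Z$ onto $Z'$; then reduce the claim to an equality of two-forms on $Z$; then verify that equality on the dense open set of free orbits, where the restricted form is completely determined by the orbital moment map and the torus action; and finally pass to all of $Z$ by continuity.

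To see that $g(Z) = Z'$, I would use the rank-drop criterion from the proof of Lemma~\ref{orbitdiffeospreserveZ}: a point $p$ lies in $Z$ precisely when $\dim(\im d\Phi_p) = \dim(\mathfrak{t}^{\circ}_p) - 1$, whereas off the fold one has $\dim(\im d\Phi_p) = \dim(\mathfrak{t}^{\circ}_p)$. Equivariance of $g$ gives $\stab(p) = \stab(g(p))$, hence $\mathfrak{t}^{\circ}_p = \mathfrak{t}^{\circ}_{g(p)}$; and $g^*\Phi' = \Phi$ with $dg_p$ invertible gives $\im d\Phi_p = \im d\Phi'_{g(p)}$. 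So $p \in Z$ if and only if $g(p) \in Z'$, i.e. $g$ restricts to a diffeomorphism $g|_Z : Z \to Z'$ with $i' \circ (g|_Z) = g \circ i$, where $i' : Z' \hookrightarrow M'$. Hence $i^*(g^*\omega') = (g \circ i)^*\omega' = (g|_Z)^*(i'^*\omega')$, and it remains to prove $i^*\omega = (g|_Z)^*(i'^*\omega')$.

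By Lemma~\ref{nofixedpoints} and the argument in the proof of Lemma~\ref{Vtangenttoorbits}, the orbits in $Z$ with nontrivial stabilizer are isolated circles, so the set $Z_{\mathrm{free}}$ of free orbits is open and dense in $Z$; as both forms in question are continuous it suffices to establish the identity over $Z_{\mathrm{free}}$, on which $Z_{\mathrm{free}} \to Z_{\mathrm{free}}/\T$ is a principal $\T^2$-bundle over a one-manifold. Pick a basis $e_1, e_2$ of $\mathfrak{t}$. Near a free orbit, take coordinates $(\theta_1, \theta_2, s)$ with $(e_j)_M = \partial_{\theta_j}$ as in the proof of Lemma~\ref{Vtangenttoorbits}; since orbits are isotropic (Proposition~\ref{orbitsareisotropic}) the restriction has the form $i^*\omega = (f_1\, d\theta_1 + f_2\, d\theta_2) \wedge ds$ with $f_1, f_2$ invariant, and the moment map relation $\iota((e_j)_M)\, i^*\omega = -d(\Phi^{e_j}|_Z)$, valid on $Z$ because the $\T$-action preserves $Z$, forces $f_j = -\partial_s(\Phi^{e_j}|_Z)$. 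Thus $i^*\omega$ over $Z_{\mathrm{free}}$ is a function of $\Phi|_Z$ and the torus action alone. Applying the same analysis on $M'$ and pulling back by the equivariant map $g|_{Z_{\mathrm{free}}}$: since $g_*((e_j)_M) = (e_j)_{M'}$, $g$ sends orbits to orbits, and $\Phi'\circ g = \Phi$, the two-form $(g|_Z)^*(i'^*\omega')$ is $\T$-invariant, vanishes on every pair of orbit-tangent vectors, and satisfies $\iota((e_j)_M)(g|_Z)^*(i'^*\omega') = -d(\Phi^{e_j}|_Z)$. On the three-manifold $Z_{\mathrm{free}}$, where the orbit directions span a rank-two subbundle, a two-form annihilating all vertical-vertical pairs is uniquely determined by its contractions with $(e_1)_M$ and $(e_2)_M$; hence $(g|_Z)^*(i'^*\omega') = i^*\omega$ on $Z_{\mathrm{free}}$, and therefore on all of $Z$.

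I expect the first step --- that $g(Z) = Z'$ --- to be the main point requiring care, since $g$ is assumed only to intertwine the moment maps, not the folded symplectic forms, so the fold cannot be read off from $g^*\omega'$ and must be recovered intrinsically from $\Phi$ together with the stabilizer data via the rank-drop criterion. The rest is the bookkeeping that, over the free locus, the restricted two-form is a function of the orbital moment map, together with a density argument.
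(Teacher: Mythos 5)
Your proof is correct and follows essentially the same route as the paper's: reduce to the dense set of free orbits in $Z$, use isotropy of orbits to write $i^*\omega$ in adapted coordinates, and use $g^*\Phi'=\Phi$ to identify the coefficient functions with derivatives of the moment map. The one point you make explicit that the paper leaves implicit is that $g(Z)=Z'$ (needed to set up the comparison on the fold); your rank-drop argument for this is the same one used in Lemma~\ref{orbitdiffeospreserveZ}, so this is a welcome clarification rather than a departure.
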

\begin{proof}
It suffices to show that $i^*\omega = i^*(g^*\omega')$ on a dense set and hence, by Lemma \ref{principalorbitsarefree},  it is enough to show the condition holds on free orbits. Let $U'$ be a neighborhood in $M'$ of a free orbit in $Z'$. By the Slice Theorem, $U$ is equivariantly diffeomorphic to $\T^2 \times \R \times \R$. Choose coordinates $(\theta'_{1}, \theta'_{2}, s',t')$ on $U$ where fold points have $t'=0$. Let $U = g^*(U')$ be given coordinates $(\theta_1,\theta_2,s,t) = (g^*\theta'_{1}, g^*\theta'_{2}, g^*s',g^*t')$. With respect to theses coordinates, using the fact that the orbits are isotropic, the folded symplectic forms on $U$ and $U'$ are
$$
\omega = f_1ds \wedge{d\theta_1} + f_2ds \wedge{d\theta_2} + \alpha \wedge dt $$
and
$$\omega' = f'_1ds' \wedge{d\theta'_1} + f'_2ds' \wedge{d\theta'_2} + \alpha' \wedge dt' 
$$
where the coefficient functions are invariant and $\alpha,\alpha'$ are invariant one-forms on $\T^2 \times \R \times \R$. It follows that 
$$
d\Phi = (f_1ds + k_1dt, f_2ds+k_2dt)
$$
and
$$d\Phi' = (f'_1ds' + k'_1dt', f'_2ds'+k'_2dt').
$$
Now, since $g^*\Phi' = \Phi$, and $d(g^*\Phi') = g^*(d\Phi'$), we see that $f_1 = g^*f'_1$ and $f_2 = g^*f'_2$. Thus, restricting to the fold $Z$, we have $i^*\omega = i^*(g^*\omega)$.  \end{proof}

\begin{remark}\label{locallemmarestriction}
If $U$ is an invariant open set in $M$ and $g:U \to U'$ is an equivariant diffeomorphism preserving moment maps, then the conclusion of Lemma \ref{lemmarestriction} still holds. If $i:U \cap Z \to U$ is the (restriction of) the inclusion of the fold, then $i^*\omega = i^*(g^*\omega')$. 
\end{remark}

We now state and prove an equivariant version of Theorem 1 in \cite{dSGW}, which provides a local normal form for a folded symplectic form on an orientable manifold. 

\begin{lemma}\label{equivariantlocalnormalform}
Let $G$ be a compact Lie group acting on a folded symplectic manifold $M$ preserving folded symplectic forms $\omega_0$ and $\omega_1$. Let $i:Z \hookrightarrow M$ be the inclusion of the fold. Then, if $i^*\omega_0=i^*\omega_1$, there exists an invariant neighborhood $U$ of $Z$ and an equivariant diffeomorphism $\psi:U \to U$ such that $\psi^*\omega_1 = \omega_0$. 
\end{lemma}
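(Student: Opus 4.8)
The plan is to run an equivariant Moser argument adapted to the folded setting. First I would invoke the equivariant version of Theorem~\ref{thmoneindSGW}: since $G$ is compact, one can average an invariant metric and an invariant choice of positively oriented section $V$ of $\ker(\omega_j)\cap TZ$ together with an invariant one-form $\alpha_j$ satisfying $\alpha_j(V)\equiv 1$, so that each $\omega_j$ is put into the normal form $p^*i^*\omega_j + d(t^2 p^*\alpha_j)$ on an invariant tubular neighborhood $Z\times\R$. Using $i^*\omega_0 = i^*\omega_1$, I would reduce to the case where the two forms agree to first order along $Z$ and differ only in the ``folded'' term $d(t^2 p^*\alpha_j)$; after a further invariant reparametrization of the normal coordinate (and shrinking the neighborhood, if needed) I may even arrange $\alpha_0 = \alpha_1$ on $Z$ itself, so that $\omega_1 - \omega_0 = d\beta$ where $\beta = t^2 p^*(\alpha_1 - \alpha_0)$ is an invariant one-form vanishing to second order along $Z$.

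Next I would set up the path $\omega_s = (1-s)\omega_0 + s\omega_1 = \omega_0 + s\, d\beta$ for $s\in[0,1]$ and seek a time-dependent invariant vector field $X_s$ with $\iota_{X_s}\omega_s = -\beta$, whose flow $\psi_s$ satisfies $\psi_s^*\omega_s = \omega_0$ by the usual Moser/Cartan computation $\frac{d}{ds}\psi_s^*\omega_s = \psi_s^*(\mathcal{L}_{X_s}\omega_s + \dot\omega_s) = \psi_s^*(d\iota_{X_s}\omega_s + d\beta) = 0$. The subtlety is that $\omega_s$ need not be folded symplectic for intermediate $s$ — indeed it degenerates along $Z$ — so $X_s$ cannot be solved for by inverting $\omega_s$ globally. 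The key point is that $\beta$ vanishes to second order along $Z$ while $\omega_s$ restricted to the normal directions degenerates only to first order, so the equation $\iota_{X_s}\omega_s = -\beta$ still admits a smooth solution $X_s$ with $X_s|_Z = 0$; concretely, in the normal coordinates one checks that the coefficient $t^2$ in $\beta$ exactly compensates for the factor of $t$ lost in inverting the degenerate form, so $X_s$ is smooth (and in fact tangent to $Z$, hence its flow preserves $Z$ and stays in the neighborhood for small time). Averaging over $G$, or simply observing that all the data are invariant, makes $X_s$ invariant and hence $\psi_s$ equivariant.

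The main obstacle, and the step I would spend the most care on, is exactly this smoothness/solvability of the Moser vector field across the degeneracy locus $Z$: one must verify that the product structure from the normal form, together with the second-order vanishing of $\beta$, guarantees that $\iota_{(\cdot)}\omega_s$ — though not an isomorphism along $Z$ — has $\beta$ in its image with a smooth preimage. I would handle this by writing everything in the coordinates $(z, t)$ on $Z\times\R$ supplied by the normal form, decomposing $\omega_s$ into its $Z$-tangential part (which agrees with $i^*\omega$ and is nondegenerate transverse to orbits by Proposition~\ref{orbitsareisotropic} and the toric hypotheses) and its $dt$-part, and solving the resulting triangular linear system for the components of $X_s$, reading off the factor of $t$ that cancels against the $t^2$ in $\beta$. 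Once $X_s$ is produced, the remaining points — completeness of the flow on a possibly smaller invariant neighborhood $U$, equivariance, and $\psi_1^*\omega_1 = \omega_0$ — are routine, and shrinking $U$ if necessary disposes of the completeness issue. Finally I would note that $\psi := \psi_1$ fixes $Z$ pointwise (since $X_s$ vanishes there), which is consistent with the conclusion though not explicitly required.
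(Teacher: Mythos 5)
Your proposal is essentially the paper's argument: an equivariant Moser deformation near the fold, using the normal form of Theorem~\ref{thmoneindSGW} with invariant auxiliary data and a primitive of the difference form that vanishes along $Z$. The only organizational difference is that the paper does not interpolate directly from $\omega_0$ to $\omega_1$; instead it introduces a common model $\eta = p^*i^*\omega_0 + d(t^2p^*\alpha)$ and runs the Moser argument twice, deforming each $\omega_j$ separately to $\eta$ and composing ($\psi = \psi_1\circ\psi_0^{-1}$). This sidesteps the issue you gloss over slightly, namely that the two tubular-neighborhood identifications coming from normal-forming $\omega_0$ and $\omega_1$ need not agree; in effect the paper's two-step scheme \emph{is} the equivariant normal form theorem you propose to cite, so your appeal to it ``by averaging'' is not wrong but hides the same Moser argument inside. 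On the other hand, you correctly identify and address the one point the paper asserts without justification: the smooth solvability of $\iota(X_s)\omega_s = \beta$ across the degeneracy locus, which requires observing that the primitive produced by the radial homotopy operator vanishes to one order higher in $t$ than the degeneracy of $\omega_s$, so that the Moser field is smooth and vanishes on $Z$. Your coordinate check of this point is a genuine improvement in rigor over the paper's one-line assertion.
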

\begin{proof}
For each $z \in Z$, choose a basis $\{V_{z},W_{z}\}$ of $\ker(\omega)_z$ with $W_{z} \in T_{z}M$ and $V_{z} \in T_{z}Z$. We can equivariantly identify an invariant neighborhood of $Z$ with $Z \times \R$ in such a way that the vector field $W$ corresponds to $\frac{d}{dt}$. By the proof of Theorem 1 in \cite{dSGW}, we may write $\omega_0 = p^*i^*\omega_0+t\mu_0$ where $p:Z \times \R \to Z$ is the projection and $\mu_0(W_z,V_z)_z >0$ for all $z \in Z$. Let $\eta = p^*i^*\omega_0 + d(t^2p^*\alpha)$ where $\alpha$ is an invariant one-form such that $\alpha(V)>0$. Then, $\eta$ is an invariant folded symplectic form on $Z \times \R$ with fold $Z$. 

For all $s \in [0,1]$, the form $\omega_{s}:=(1-s)\omega_0+s\eta$ is an invariant folded symplectic form with fold $Z$. Since $i^*\omega_0=i^*\eta$ and $\omega_0-\eta$ is closed, we may find an invariant one-form $\beta$ on $Z \times \R$ such that $\beta_z=0$ for all $z \in Z$ and $d\beta = \omega_0-\eta$. For each $s \in [0,1]$ there is an invariant vector field $X_s$ such that $\iota(X_s)\omega_s = \beta$. The flow $\{\phi_s\}$ of $X_s$ consists of equivariant diffeomorphisms and $\phi_{1}^*\eta = \omega_0$. Since $\beta_{z}=0$ for all $z \in Z$, the flow of $X_s$ fixes $Z$ for all $s \in [0,1]$. 

We next note that
$$
\begin{array}{rcl}
\mathcal{L}_{X_{s}}\omega_{s} &=& \iota(X_s)d\omega_{s}+d\iota(X_s)\omega_s \\
&=& d\beta
\end{array}
$$
so that 

$$
\begin{array}{rcl}
\frac{d}{ds}(\phi_{s}^*\omega_{s}) &=& \phi^*_{s}\left(\frac{d}{ds}\omega_{s}+\mathcal{L}_{X_{s}}\omega_{s}\right) \\
&=& \phi^*_{s}(\eta-\omega+d\beta) \\
&=& \phi^*_{s}(\eta-\omega+\omega-\eta) \\
&=& 0.
\end{array}
$$
We conclude that $\phi^*_{s}\omega_{s}$ is independent of $s$. Noting that $\omega=\phi^*_{0}\omega_{0}=\phi^*_{1}\omega_1=\phi^*_{1}\eta$, we define $\psi_0 := \phi_1$. This yields an equivariant diffeomorphism satisfying $\psi_0^*\omega_0 = \eta$.

Since $i^*\omega_0 = i^*\omega_1$, this construction works for $\omega_1$ as well so that there is an equivariant diffeomorphism $\psi_1$ satisfying $\psi_1^*\omega_1 = \eta$. The equivariant diffeomorphism $\psi:=\psi_{1} \circ \psi_0^{-1}$ then satisfies $\psi^*\omega_1 = \omega_0$. \end{proof}

\begin{remark}\label{localequivariantlocalnormalform}
From the proof of Lemma \ref{equivariantlocalnormalform}, we can see that if $X$ is a closed, invariant subset of the fold $Z$ (in particular, an orbit), and there is an invariant neighborhood of $X$ such that the restrictions of folded symplectic forms $\omega_0$ and $\omega_1$ to $X$ are equal, then we can find an equivariant diffeomorphism $\psi$ of a neighborhood $U$ of $X$ in $M$ such that $\psi^*\omega_1=\omega_0$.
\end{remark}

\begin{theorem}\label{localstructure}
Let $(M, \omega, \Phi)$ and $(M', \omega', \Phi')$ be orientable, compact, folded symplectic toric four-manifolds. If there exists a diffeomorphism $h:M/\T \to M'/\T$ of manifolds with corners such that $h^*\underline{\Phi}' = \underline{\Phi}$, then $M$ and $M'$ are locally isomorphic. 
\end{theorem}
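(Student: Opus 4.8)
The plan is to prove Theorem \ref{localstructure} by working orbit-by-orbit in the orbit space and assembling local isomorphisms, treating the symplectic locus and the fold separately, and gluing along their common boundary. Fix $x \in M/\T$. There are three cases, according to whether $x$ lies in the interior of the symplectic locus image, in the interior of $Z/\T$, or at a boundary point of $Z/\T$ (which by Lemma \ref{nofixedpoints} and Example \ref{circlestabilizers} is exactly where the circle stabilizer appears). In the first case, $h$ restricts to a diffeomorphism of orbit-space neighborhoods preserving orbital moment maps and avoiding the fold entirely, so the local normal form for symplectic toric manifolds (Proposition 1.1 in \cite{KL}) produces the desired equivariant symplectomorphism $\psi_x$ lifting $h$; this is the standard Delzant-type local uniqueness and requires no new work.

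For a point $x$ in the (closed) image $Z/\T$, the strategy is first to build an equivariant diffeomorphism $g$ of invariant neighborhoods $\pi^{-1}(U) \to (\pi')^{-1}(h(U))$ that lifts $h$ and preserves the moment maps \emph{on the nose}, and then to correct it to a folded symplectomorphism using the equivariant normal form Lemma \ref{equivariantlocalnormalform}. To get $g$: by Lemma \ref{orbitdiffeospreserveZ}, $h$ carries $Z/\T$ to $Z'/\T$, and by the proof of Proposition \ref{orbitspaceisamanifoldwithcorners} it preserves stabilizer types, so the orbit $\pi^{-1}(x)$ and $(\pi')^{-1}(h(x))$ have the same stabilizer $G$ (either trivial or $S^1$). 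Using the Slice Theorem on both sides we identify invariant neighborhoods of these orbits with the same model $\T\times_G \mathcal V$; the remaining freedom in the slice identification is used to match the orbit-space coordinate with $h$ and, using $h^*\underline\Phi' = \underline\Phi$ together with the structure of $d\Phi$ in slice coordinates worked out in the proof of Lemma \ref{lemmarestriction}, to arrange $g^*\Phi' = \Phi$. Then Lemma \ref{lemmarestriction} (or its local form, Remark \ref{locallemmarestriction}) gives $i^*(g^*\omega') = i^*\omega$ on the fold. Now pull $\omega'$ back by $g$ and apply Lemma \ref{equivariantlocalnormalform} with $G = \stab(x)$, $\omega_0 = \omega$, $\omega_1 = g^*\omega'$: it yields an equivariant $\varphi$ of a neighborhood of $Z$ with $\varphi^*\omega_1 = \omega_0$ fixing $Z$ pointwise. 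Setting $\psi_x := g \circ \varphi$ gives an equivariant folded symplectomorphism lifting $h$ near $x$ — more precisely one should shrink $U$ so the conclusion is a genuine isomorphism of open folded symplectic toric manifolds, and use Remark \ref{localequivariantlocalnormalform} if one prefers to localize Lemma \ref{equivariantlocalnormalform} to the single orbit $\pi^{-1}(x)$.

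The main obstacle is the middle step: producing the equivariant lift $g$ of $h$ that is moment-preserving on the nose near a fold orbit, and in particular handling the circle-stabilizer boundary points where the orbit-space model is $[0,\infty)\times\R$ and the slice representation of $S^1$ on the normal $\R^2$ must be matched on the two sides. One must check that the slice representations are isomorphic (they are, since the stabilizer and the effectiveness/orientability constraints from Lemma \ref{nofixedpoints} pin down the weight up to sign, and the orientation fixes the sign), and that after identifying slices the residual gauge freedom — equivariant diffeomorphisms of $\T\times_G\mathcal V$ covering the identity on the orbit space — is exactly enough to absorb the discrepancy between $g^*\Phi'$ and $\Phi$, which by Lemma \ref{lemmarestriction}'s computation differ only in the components transverse to the orbit and transverse to the fold, i.e. in the $dt$-coefficients $k_i$ versus $k'_i$; these can be killed by a fiber-preserving shear because the $f_i = g^*f'_i$ already agree. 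Once $g$ is in hand, the rest is the now-routine Moser argument packaged in Lemma \ref{equivariantlocalnormalform}. Finally, since the diffeomorphism $h$ is the same in all three cases, the local isomorphisms $\psi_x$ all lift this single $h$, which is exactly what the definition of \textbf{locally isomorphic} demands, so the proof is complete.
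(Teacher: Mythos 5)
Your proposal is correct and follows essentially the same route as the paper: handle the symplectic locus by the Karshon--Lerman local uniqueness, lift $h$ near a fold orbit via the Slice Theorem and Lemma \ref{orbitdiffeospreserveZ}, invoke Lemma \ref{lemmarestriction} to match the forms on the fold, and finish with the equivariant Moser argument of Lemma \ref{equivariantlocalnormalform}. The only stylistic difference is that you spend effort arranging $g^*\Phi'=\Phi$ by hand, whereas this is automatic for any lift of $h$ since $\Phi=\underline{\Phi}\circ\pi$ and $h^*\underline{\Phi}'=\underline{\Phi}$.
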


\begin{proof}
The result follows from Lemma B.4 in \cite{KL} on the symplectic loci $M\setminus{Z}$ and $M'\setminus{Z'}$. We therefore need only focus on points in the fold. Choose $x \in Z/\T$. By the Slice Theorem and Lemma \ref{orbitdiffeospreserveZ}, the diffeomorphism $h \colon M/\T \to M'/\T$ lifts to an equivariant diffeomorphism $g \colon N \to N'$ of neighborhoods of the orbits $\pi^{-1}(x)\subset Z$ and $(\pi')^{-1}(h(x)) \subset Z'$ respectively. Since $h$ preserves orbital moment maps, $g$ preserves the restrictions of $\Phi$ and $\Phi'$ to $N$ and $N'$. By Lemma \ref{lemmarestriction} and Remark \ref{locallemmarestriction}, $i^*\omega = i^*(g^*\omega)$ on $N$ where $i\colon N \cap Z \to N$ is the inclusion. Lemma \ref{equivariantlocalnormalform} and Remark \ref{localequivariantlocalnormalform} now imply that there exists a neighborhood $U$ in $N$ containing $\pi^{-1}(x)$ and an equivariant diffeomorphism $\psi\colon U \to U$ with $\psi^*(g^*\omega') = \omega$. It follows that $\phi=g \circ \psi$ is an equivariant diffeomorphism on a neighborhood of the orbit $\pi^{-1}(x)$ such that $\phi^*\omega' = \omega$. \end{proof}

\section{The Global Picture} 

As we saw in the previous section, a diffeomorphism of orbit spaces preserving orbital moment maps is enough to guarantee that two folded symplectic toric four manifolds are locally isomorphic. We will now show that under a suitable topological restriction on the orbit spaces, the existence of a moment-preserving diffeomorphism of folded symplectic toric four manifolds gives rise to an isomorphism between them. In all that follows, when given a diffeomorphism $g:M \to M'$ of folded symplectic toric four-manifolds, we denote by $\omega_{s}$ the convex combination of $\omega$ and $g^*\omega'$. That is, for $s \in [0,1]$, $\omega_s = (1-s)\omega+sg^*\omega'$.

\begin{lemma}\label{kerneltalk}
Let $(M,\omega,\Phi)$ and $(M'\omega',\Phi')$ be orientable, folded symplectic four-manifolds, $Z \subset M$ the folding hyperurface, and $g:M \to M'$ an equivariant diffeomorphism such that $g^*\Phi' = \Phi$. Then
\begin{itemize}
\item[(i)] $\ker(\omega) \cap TZ = \ker(\omega_{s}) \cap TZ$, and
\item[(ii)] for each $z \in Z$, if $W_{z} \in \ker(\omega)_{z} \cap T_{z}M$ and $W^s_{z} \in \ker(\omega_s)_{z} \cap T_{z}M$, $W^s_z$ and $W_z$ differ by a vector tangent to the orbit through $z$.
\end{itemize}
\end{lemma}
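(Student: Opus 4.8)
The whole argument rests on one fact: $\omega$ and $g^*\omega'$ carry the \emph{same} moment map. Since $g$ is equivariant it intertwines $\xi_M$ with $\xi_{M'}$, so $\iota(\xi_M)(g^*\omega') = g^*\bigl(\iota(\xi_{M'})\omega'\bigr) = -g^*\bigl(d(\Phi')^\xi\bigr) = -d\Phi^\xi$, using $g^*\Phi'=\Phi$. Hence every $\omega_s$ also has moment map $\Phi$, and $\iota(\xi_M(z))(\omega_s)_z = -d\Phi^\xi_z$ is independent of $s$. Writing $\tau := \omega - g^*\omega'$, so that $\omega_s = \omega - s\tau$, this says precisely that for each $z\in Z$ the two-form $\tau_z$ is annihilated by every $\xi_M(z)$, i.e.\ $\tau_z$ vanishes on the orbit tangent space $T_z\mathcal{O}_z$. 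Two auxiliary facts will also be used: as in the proof of Lemma \ref{orbitdiffeospreserveZ}, $g(Z)=Z'$, so $g^*\omega'$ is again a folded symplectic form with fold $Z$; and $i^*\tau = 0$ by Lemma \ref{lemmarestriction}.

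For (i) I would use the splitting $T_zM = T_zZ + \ker(\omega)_z$, valid for any folded symplectic form, to identify $\ker(\omega)_z \cap T_zZ$ with the kernel of $i^*\omega$ on $T_zZ$, and likewise $\ker(g^*\omega')_z \cap T_zZ$ with the kernel of $i^*(g^*\omega')$ on $T_zZ$. These two lines coincide because $i^*\omega_s = (1-s)\,i^*\omega + s\,i^*(g^*\omega') = i^*\omega$ by Lemma \ref{lemmarestriction}; a spanning vector $V_z$ then has $\iota_{V_z}\omega_z = \iota_{V_z}(g^*\omega')_z = 0$, hence $\iota_{V_z}(\omega_s)_z = 0$. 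Conversely $\ker(\omega_s)_z \cap T_zZ$ is trivially contained in the kernel of $i^*\omega_s = i^*\omega$ on $T_zZ$, i.e.\ in $\ker(\omega)_z \cap T_zZ$. This proves (i); moreover, since $i^*\omega$ is nonvanishing on $Z$ while $0 \neq V_z \in \ker(\omega_s)_z$, the form $(\omega_s)_z$ has rank exactly two, so $\ker(\omega_s)_z = \langle V_z, W^s_z\rangle$ for any $W^s_z \in \ker(\omega_s)_z$ transverse to $Z$.

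For (ii) I would fix $z\in Z$ and split by the stabilizer, which by Lemma \ref{nofixedpoints} is trivial or a circle. Assume it is trivial, so $T_z\mathcal{O}_z$ is two-dimensional. Pick $W_z$ transverse to $Z$ with $\ker(\omega)_z = \langle V_z, W_z\rangle$ and a conormal $\lambda$ of $Z$ with $\lambda(W_z)=1$. As $\tau_z$ vanishes on the hyperplane $T_zZ$ we may write $\tau_z = \lambda\wedge\beta$ with $\beta := \iota_{W_z}\tau_z$; then $\beta(W_z)=0$, and $\beta$ annihilates $T_z\mathcal{O}_z$ because $\tau_z$ does. Using that orbits are isotropic (Proposition \ref{orbitsareisotropic}) one computes the $\omega_z$-orthogonal $(T_z\mathcal{O}_z)^{\omega_z} = \ker(\omega)_z + T_z\mathcal{O}_z = \langle W_z\rangle + T_z\mathcal{O}_z$, which equals $\ker d\Phi_z$; hence $\beta$ annihilates $\ker d\Phi_z$, so $\beta \in (\ker d\Phi_z)^{\circ} = \{\,\iota_X\omega_z : X\in T_z\mathcal{O}_z\,\}$, say $\beta = \iota_{v_0}\omega_z$ with $v_0\in T_z\mathcal{O}_z$. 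Then $\iota_{W_z}(\omega_s)_z = -s\,\iota_{W_z}\tau_z = -s\beta$ while $\iota_{s v_0}(\omega_s)_z = s\,\iota_{v_0}\omega_z = s\beta$ (since $\iota_{v_0}\tau_z = 0$), so $W_z + s v_0$ is transverse to $Z$ and lies in $\ker(\omega_s)_z$; taking $W^s_z := W_z + s v_0$ gives $W^s_z - W_z = s v_0 \in T_z\mathcal{O}_z$.

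The circle-stabilizer case is the step I expect to be the main obstacle: there $V_z\in\ker(\omega)_z$ forces $(T_z\mathcal{O}_z)^{\omega_z} = T_zM$, hence $d\Phi_z = 0$, so the moment map gives no pointwise information beyond $i^*\omega = i^*(g^*\omega')$ and the computation above collapses. Instead I would appeal to the slice theorem: a neighborhood of the orbit is $\T\times_{S^1}(\R\times\R^2)$ with $S^1$ rotating the $\R^2$-factor and $Z$ corresponding to $\{0\}\times\R^2$, so the isotropy representation on $T_zM$ splits as a two-dimensional trivial summand (the $\T$-orbit line together with the slice direction normal to $Z$) plus the rotation module $\R^2$, and these are the only two $S^1$-invariant planes. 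Both $\ker(\omega)_z$ and $\ker(g^*\omega')_z$ are invariant planes meeting $T_zZ$ in exactly a line, which rules out the rotation summand; so both equal the trivial summand, and hence so does $\ker(\omega_s)_z$. Thus all three kernels coincide at $z$, whence $W^s_z - W_z \in \ker(\omega)_z = \langle V_z, W_z\rangle$, which reduces to a multiple of $V_z \in T_z\mathcal{O}_z$ once the transverse generators are normalized compatibly (as they are in the setting of Lemma \ref{equivariantlocalnormalform}). Together with the free case this establishes (ii).
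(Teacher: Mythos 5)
Your proof is correct, and for part (ii) it takes a genuinely different route from the paper's. Part (i) is essentially the paper's argument: both rest on $i^*\omega_s = i^*\omega$ from Lemma \ref{lemmarestriction}; you merely spell out the identification of $\ker(\omega)_z\cap T_zZ$ with $\ker(i^*\omega)$ via the splitting $T_zM = T_zZ + \ker(\omega)_z$, which the paper leaves implicit. For (ii) the paper argues by density: it reduces to free orbits ``as in Lemma \ref{Vtangenttoorbits}'' and then compares the images $d\pi(\ker(g^*\omega'))$ and $d\pi(\ker(\omega))$ in the orbit space, leaving both the passage to singular orbits and the intermediate forms $\omega_s$ implicit. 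You instead work pointwise: the moment-map identity makes $\tau=\omega-g^*\omega'$ vanish on orbit directions everywhere and on $T_zZ$ along the fold, so $\tau_z=\lambda\wedge\iota_{W_z}\tau_z$ with $\iota_{W_z}\tau_z$ annihilating $\ker d\Phi_z=(T_z\mathcal{O}_z)^{\omega_z}$, and the explicit corrector $v_0\in T_z\mathcal{O}_z$ with $\iota_{v_0}\omega_z=\iota_{W_z}\tau_z$ produces $W^s_z=W_z+sv_0$ for all $s$ at once; circle-stabilizer points, where $d\Phi_z=0$ and this computation degenerates, are handled by classifying the $S^1$-invariant planes in the isotropy representation $\R^2_{\mathrm{triv}}\oplus\R^2_{\mathrm{rot}}$ and noting that only the trivial summand meets $T_zZ$ in a line. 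What your route buys is precisely the two points the paper elides: the statement for every $s\in[0,1]$ rather than just the endpoints, and an honest treatment of singular orbits, where ``differs by a vector tangent to the orbit'' is not an obviously closed condition (the orbit dimension drops), so a pure density argument needs care. The cost is length. Both arguments share an imprecision already present in the statement itself: taken literally for arbitrary elements of the kernels, (ii) fails (rescale $W^s_z$), and it must be read as a statement about compatibly normalized transverse generators, which is how it is used in Lemma \ref{basicform}.
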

\begin{proof}
By Lemma \ref{lemmarestriction}, if $i:Z \hookrightarrow M$ is the inclusion, then $i^*\omega = i^*g^*\omega$ everywhere on $Z$. Hence, $i^*\omega_{s} = i^*\omega$. This establishes (i). 

As in the proof of Lemma \ref{Vtangenttoorbits}, it suffices to show (ii) assuming the orbit through $z$ is free. On a neighborhood $U$ of such an orbit, the projection $\pi:U \to U/\T$ is a map of manifolds and
\begin{eqnarray*}
d\pi(\ker(g^*\omega)) &=& d\pi(d(g^{-1})(\ker{\omega})) \\
&=& d\pi(\ker(\omega)),
\end{eqnarray*}
which proves (ii) on free orbits. \end{proof}

Before continuing, it will be beneficial to recall some facts about the {\bf basic cohomology} of a manifold $M$ admitting an action of a compact Lie group $G$. A {\bf basic form} on $M$ is an element $\beta \in \Omega^*(M)$ such that $\beta$ is invariant and satisfies $\iota(\xi_{_M})\beta=0$ for all $\xi \in \mathfrak{g}$. The collection of basic forms is a subcomplex $\Omega^*_{\textrm{bas}}(M) \subset \Omega^*(M)$ whose cohomology is isomorphic to the singular cohomology $H^*(M/G;\R)$ of the orbit space. Thus, the basic cohomology of a manifold is a topological invariant so that if $M$ and $N$ are homotopy equivalent, then $H^*_{\textrm{bas}}(M) = H^*_{\textrm{bas}}(N)$. Furthermore, the Poincar\'{e} Lemma holds for basic cohomology. If $p:M \times \R \to M$ is projection on the first factor, then the induced map $p^*:\Omega^*(M) \to \Omega^*(M \times \R)$ induces an isomorphism on basic cohomology. 

\begin{lemma} \label{basichomotopyproperty}
Suppose a compact Lie group, $G$, acts on a manifold, $M$. Let $i:Z  \hookrightarrow U$ be the inclusion of a submanifold into an invariant tubular neighborhood in $M$. If $\eta$ is a basic $(k+1)$-form on $U$ with $i^*\eta=0$, then there exists a basic $k$-form $\sigma$ with
\begin{enumerate}
\item $\sigma_{z}=0$ for all $z \in Z$, and
\item $d\sigma=\eta$ on $U$. 
\end{enumerate}
\end{lemma}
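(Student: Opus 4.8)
The plan is to establish this as a relative, $G$-equivariant Poincar\'{e} lemma, via an explicit homotopy operator built from an equivariant deformation retraction of $U$ onto $Z$. (Since conclusion (2) forces $d\eta = 0$, we tacitly assume $\eta$ is closed; this holds in every application we make of the lemma.) Because $G$ is compact and $U$ is an \emph{invariant} tubular neighborhood of $Z$, we may --- after choosing an invariant metric and invoking the equivariant tubular neighborhood theorem --- assume $U$ is equivariantly diffeomorphic to an open neighborhood of the zero section in the normal bundle $\nu \to Z$. Fiberwise scalar multiplication by $t \in [0,1]$ then produces a smooth family of equivariant maps $r_t \colon U \to U$ with $r_1 = \mathrm{id}_U$, with $r_0 = j \circ \pi_Z$ (where $\pi_Z \colon U \to Z$ is the bundle projection and $j \colon Z \hookrightarrow U$ the zero section), and with $r_t|_Z = \mathrm{id}_Z$ for every $t$.

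Next I would assemble the $r_t$ into a single smooth map $R \colon U \times [0,1] \to U$, $R(x,t) = r_t(x)$, and define a homotopy operator $h \colon \Omega^{k+1}(U) \to \Omega^{k}(U)$ by decomposing $R^*\eta = dt \wedge \alpha_t + \beta_t$, with $\alpha_t \in \Omega^{k}(U)$ and $\beta_t \in \Omega^{k+1}(U)$ depending smoothly on $t$, and setting $h\eta := \int_0^1 \alpha_t \, dt$. The standard Cartan-calculus computation yields the homotopy identity $r_1^*\eta - r_0^*\eta = d(h\eta) + h(d\eta)$, that is, $\eta - \pi_Z^*\, j^*\eta = d(h\eta) + h(d\eta)$. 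Setting $\sigma := h\eta$ and using $d\eta = 0$ together with the hypothesis $j^*\eta = i^*\eta = 0$ gives $d\sigma = \eta$ on $U$, which is conclusion (2).

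It then remains to check that $\sigma$ is basic and vanishes along $Z$. Invariance of $\sigma$ is immediate, since each $r_t$ is equivariant, so $h$ commutes with pullback by elements of $G$. For the contraction condition, fix $\xi \in \mathfrak{g}$ and let $\tilde\xi$ be the vector field it generates on $U \times [0,1]$; since $G$ acts trivially on the $[0,1]$ factor, $\tilde\xi$ has no $\partial_t$ component, and equivariance of $R$ gives $dR(\tilde\xi) = \xi_{_U} \circ R$, whence $\iota(\tilde\xi) R^*\eta = R^*(\iota(\xi_{_U})\eta) = 0$ because $\eta$ is basic. Comparing the $dt$ components of $\iota(\tilde\xi)(dt \wedge \alpha_t + \beta_t) = 0$ gives $\iota(\xi_{_U})\alpha_t = 0$ for all $t$, hence $\iota(\xi_{_U})\sigma = \int_0^1 \iota(\xi_{_U})\alpha_t\, dt = 0$, so $\sigma$ is basic. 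Finally, since $r_t(z) = z$ for all $z \in Z$ and all $t$, we have $dR_{(z,t)}(\partial_t) = \frac{d}{dt} r_t(z) = 0$, so $(\alpha_t)_z = 0$ for every $z \in Z$, and hence $\sigma_z = \int_0^1 (\alpha_t)_z\, dt = 0$, which is conclusion (1). The step demanding the most care --- and the one that distinguishes this from the classical relative Poincar\'{e} lemma --- is arranging $r_t$ to be simultaneously equivariant and to fix $Z$ pointwise for all $t$, and then confirming that the resulting homotopy operator sends basic forms to basic forms; the rest is a routine adaptation of the usual argument.
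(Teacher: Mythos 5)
Your proof is correct and follows essentially the same route as the paper's: both construct the homotopy operator associated to the fiberwise scaling retraction of the invariant tubular neighborhood onto $Z$ (your $\int_0^1 \alpha_t\,dt$ from the decomposition $R^*\eta = dt\wedge\alpha_t+\beta_t$ is the same operator as the paper's $\int_0^1\rho_s^*(\iota(\chi_s)\eta)\,ds$), and both verify invariance, horizontality, vanishing along $Z$, and the homotopy identity in the same way. Your observation that the statement tacitly requires $d\eta=0$ is accurate --- the paper's own proof also invokes closedness of $\eta$ even though the hypothesis is omitted from the lemma as stated.
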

\begin{proof}
Define 
$$
\sigma =K\eta :=  \int_{0}^{1} \rho^*_{s}(\iota(\chi_{s})\nu) \ ds
$$
where 
\begin{eqnarray*}
\rho_s:U \times [0,1] &\to& U \\
(z,v,s) &\mapsto& (z,sv)
\end{eqnarray*}
 is equivariant and
$$
\chi_{s}(\rho_{s}) = \left.\frac{d}{dr}\right|_{r=s} \rho_{r}
$$
is invariant.
Note that the form $\sigma$ is invariant. When $s=0$ ({\em i.e.}, at points of $Z$), $\sigma=0$. Finally, since $\eta$ is basic, for any $\xi \in \mathfrak{g}$ we have 
\begin{eqnarray*}
\iota(\xi_{M})\sigma &=& \iota(\xi_{M})K\eta \\
&=& \iota(\xi_{M})\int_{0}^{1} \rho^*_{s}(\iota(\chi_{s})\eta) \ ds \\
&=& \int_{0}^{1} \iota(\xi_{M})\rho^*_{s}(\iota(\chi_{s})\eta) \ ds \\
&=& \int_{0}^{1} \rho^*_{s}(\iota(d(\rho_{s}^{-1}))(\xi_{M})\iota(\chi_{s})\eta) \ ds \\
&=& 0.
\end{eqnarray*}
We now prove that $d\sigma = \eta$. Let $p:U \to Z$ be the projection of $U$ onto $Z$ and note that since $\eta$ is closed and $i^*\eta=0$,
\begin{eqnarray*}
d\sigma &=& dK\eta+Kd\eta \\ 
&=& d\int_{0}^{1} \rho^*_{s}(\iota(\chi_{s})\eta) \ ds+\int_{0}^{1} \rho^*_{s}(\iota(\chi_{s})d\eta) \ ds \\
&=& \int_{0}^{1} \rho_{s}^*\mathcal{L}_{\chi_{s}}\eta \ ds\\
&=& \int_{0}^{1} \frac{d}{ds} \rho_{s}^*\eta \ ds\\
&=& \rho^*_{1}\eta-\rho^*_0\nu \\
&=& \eta-(i \circ p)^*(\eta)\\
&=& \eta.
\end{eqnarray*}\end{proof}

We are now ready to state a technical lemma that, along with a variant of Moser's Trick, we will use to prove the existence of an equivariant folded symplectomorphism given an equivariant diffeomorphism preserving moment maps. 

\begin{lemma}\label{basicform}
Let $(M,\omega, \Phi)$ and $(M', \omega', \Phi')$ be orientable, folded symplectic toric four-manifolds. Suppose that $g:M \to M'$ is an equivariant diffeomorphism such that $g^*\Phi' = \Phi$. Then, if $H^2(M/\T; \Z)=0$, there exists a basic one-form $\beta$ on $M$ such that
\begin{enumerate}
\item $d\beta = g^*\omega'-\omega$, and
\item for all $z \in Z$, $\beta_z(W_z)=0$ whenever $W_z \in \ker(\omega_s)_z$.
\end{enumerate}
\end{lemma}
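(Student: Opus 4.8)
The plan is to build $\beta$ in two stages: first produce a basic primitive for the closed basic two-form $g^*\omega' - \omega$ using the cohomological hypothesis, and then correct it near $Z$ so that it kills the kernel directions of each $\omega_s$, using the homotopy operator of Lemma \ref{basichomotopyproperty}. I would begin by checking that $\tau := g^*\omega' - \omega$ is basic: it is invariant because $g$ is equivariant and $\omega,\omega'$ are invariant, and it is horizontal because by Proposition \ref{orbitsareisotropic} both $\omega$ and $g^*\omega'$ vanish on orbits — more precisely, $\iota(\xi_M)\omega = -d\Phi^\xi$ and $\iota(\xi_M)g^*\omega' = -d(g^*\Phi')^\xi = -d\Phi^\xi$, so $\iota(\xi_M)\tau = 0$ for all $\xi \in \mathfrak{t}$. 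Thus $\tau$ represents a class in $H^2_{\mathrm{bas}}(M) \cong H^2(M/\T;\R)$, which is zero since $H^2(M/\T;\Z) = 0$. Hence there is a basic one-form $\beta_0$ with $d\beta_0 = \tau$.

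Next I would arrange condition (2). By Lemma \ref{lemmarestriction} we have $i^*\omega = i^*(g^*\omega')$, so $i^*\tau = 0$ on $Z$, i.e. $i^*(d\beta_0) = 0$. Working in an invariant tubular neighborhood $U$ of $Z$, consider the restriction $i^*\beta_0$, a basic one-form on $Z$; it is closed there. Here I would like to replace $\beta_0$ on $U$ by something that vanishes at points of $Z$. The cleanest route is: since $i^*\tau = 0$, apply Lemma \ref{basichomotopyproperty} to $\tau|_U$ to obtain a basic one-form $\sigma$ on $U$ with $\sigma_z = 0$ for all $z \in Z$ and $d\sigma = \tau$ on $U$. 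Then on $U$ the basic one-form $\beta_0 - \sigma$ is closed, and I can try to patch $\sigma$ (which is only defined on $U$) and $\beta_0$ (defined globally) together into a single global basic one-form that agrees with $\sigma$ near $Z$; this requires writing $\beta_0 - \sigma = d\phi$ for a basic function $\phi$ on $U$ (possible if $H^1_{\mathrm{bas}}(U) = 0$, which holds when $U$ deformation retracts onto $Z$ and $H^1(Z/\T;\R)=0$ — and $Z/\T$ is a disjoint union of closed intervals by Proposition \ref{orbitspaceisamanifoldwithcorners}, so this is automatic), then cut off $\phi$ by an invariant bump function supported in $U$ and set $\beta := \beta_0 - d(\chi\phi)$, which still satisfies $d\beta = \tau$ globally and equals $\sigma$ on a smaller neighborhood of $Z$, hence vanishes at every $z \in Z$.

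Finally I would verify condition (2) from $\beta_z = 0$. At $z \in Z$, $\ker(\omega_s)_z$ is two-dimensional and spanned by a vector $V_z \in \ker(\omega_s)_z \cap T_zZ$ together with a transverse vector $W_z^s \in T_zM$. If $\beta_z = 0$ as a covector then trivially $\beta_z(W_z) = 0$ for every $W_z$, so condition (2) holds for all $s$ simultaneously — in fact the more refined statements of Lemma \ref{kerneltalk}, that $\ker(\omega_s) \cap TZ$ is independent of $s$ and that $W_z^s$ differs from a fixed $W_z$ by an orbit-tangent vector, are not even needed once $\beta$ vanishes pointwise on $Z$; they would only be required if I settled for a weaker normalization of $\beta$ along $Z$. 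The main obstacle is the patching step in the middle paragraph: ensuring the locally-defined, $Z$-vanishing primitive $\sigma$ can be spliced into a global primitive without destroying either $d\beta = \tau$ or the vanishing on $Z$. This comes down to the vanishing of $H^1_{\mathrm{bas}}$ of the tubular neighborhood, which follows from the explicit one-dimensional-with-boundary structure of $Z/\T$; I would make that retraction argument carefully, since it is the one place where the four-dimensionality and the structure theory from Section \ref{localsection} genuinely enter.
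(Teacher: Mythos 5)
Your overall architecture matches the paper's: show $g^*\omega'-\omega$ is basic, use $H^2(M/\T;\Z)=0$ to get a global basic primitive, use Lemma \ref{basichomotopyproperty} to get a second primitive near $Z$ that vanishes along $Z$, and splice the two. The gap is in the splice. You write $\beta_0-\sigma=d\phi$ on the tubular neighborhood $U$ of $Z$, which requires $H^1_{\textrm{bas}}(U)\cong H^1(Z/\T;\R)=0$, and you justify this by asserting that $Z/\T$ is a disjoint union of closed intervals. That is not established anywhere in the paper and does not follow from Proposition \ref{orbitspaceisamanifoldwithcorners}: that proposition (and the proof of Lemma \ref{Vtangenttoorbits}) only gives that $Z/\T$ is a compact one-manifold whose \emph{boundary} points come from orbits with circle stabilizers. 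A component of $Z$ consisting entirely of free orbits descends to a circle component of $Z/\T$, and nothing in the structure theory of Section \ref{localsection} rules this out. For such a component $H^1(Z/\T;\R)\neq 0$, the closed basic one-form $\beta_0-\sigma$ need not be exact, and your cutoff $\beta:=\beta_0-d(\chi\phi)$ cannot be formed. So as written the argument proves the lemma only under an additional unverified hypothesis on $Z/\T$.

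The paper's proof is designed precisely to avoid this: it does not try to make $\beta$ vanish pointwise on $Z$. Instead it writes $\sigma-\alpha=p^*\nu+df$ with $\nu$ a closed basic one-form on $Z$ representing the (possibly nonzero) class in $H^1_{\textrm{bas}}(Z)$, patches with a partition of unity, and adds the correction $-\rho_{U_1}(p^*\nu)+f\,d\rho_{U_1}$. The resulting $\beta$ equals $-p^*\nu$ at points of $Z$, which is generally nonzero, but it still annihilates $\ker(\omega_s)_z$: it kills the orbit-tangent direction because $\nu$ is basic, and it kills the transverse direction $W^s_z$ because $p^*\nu$ is pulled back from $Z$ and, by Lemma \ref{kerneltalk}, $W^s_z$ differs from the fiber direction $\partial/\partial t$ by an orbit-tangent vector. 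This is exactly why Lemma \ref{kerneltalk} is needed in the paper and why your observation that it becomes unnecessary under your stronger normalization should have been a warning sign: the obstruction to that stronger normalization is the class $\nu$, and the weaker conclusion (2) is all that survives when $\nu\neq 0$. To repair your argument, either prove that $Z/\T$ has no circle components (a nontrivial claim the paper does not make), or weaken your goal from $\beta_z=0$ to $\beta_z|_{\ker(\omega_s)_z}=0$ and carry the class $\nu$ through the patching as the paper does.
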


\begin{proof}\label{techlemma}
The first fact to note is that $\eta = g^*\omega'-\omega$ is a basic form on $M$: it is closed, and it is invariant since $\omega,\omega'$ are invariant and $g$ is equivariant. Also, for any $\xi \in \mathfrak{t}$, 
\begin{eqnarray*}
\iota(\xi_{_M})\eta &=& \iota(\xi_{_M})g^*\omega'-\iota(\xi_{_M})\omega \\
&=& -d\Phi^{\xi}+d\Phi^{\xi} \\
&=& 0 
\end{eqnarray*}
since $g$ is moment-preserving. By assumption, $H^2(M/\T;\Z)=0$ and so $H^2(M/\T;\R)=0$. It follows that the basic cohomology $H^2_{\textrm{bas}}(M)=0$ as well. We conclude that there exists a basic one-form $\alpha$ on $M$ such that $d\alpha=\eta$. 

By Lemma \ref{lemmarestriction}, the restriction $i^*g^*\omega'-i^*\omega=0$. In light of Lemma \ref{basichomotopyproperty}, there exists a basic one-form $\sigma$ on an invariant tubular neighborhood $U_1$ of $Z$ in $M$ such that $\sigma_{z}=0$ for all $z \in Z$ and $d\sigma = \eta|_{U_1}$. Note that $U_1$ is orientable and is equivariantly diffeomorphic to $Z \times \R$.

Let $U_2=M\setminus{Z}$ and be $\{\rho_{U_1},\rho_{U_2}\}$ an invariant partition of unity subordinate to the open cover $\{U_1,U_2\}$. Define on $M$ the one-form
$$
\tau:=\rho_{U_1}\sigma+\rho_{U_2}\alpha.
$$
Note that $\tau$ is basic and $\tau_{z}=0$ for all $z \in Z$, but
$$
d\tau = \eta+d\rho_{U_1}\wedge{\sigma}+d\rho_{U_2}\wedge{\alpha}.
$$

The form $\sigma-\alpha$ is a closed, basic one-form on the tubular neighborhood $U_1$. By the Poincar\'{e} Lemma for basic cohomology, the map $p^*:H^1_{\textrm{bas}}(Z)\to H^1_{\textrm{bas}}(U_1)$ induced by the projection $p:U \to Z$ is an isomorphism and so $\sigma - \alpha = p^*\nu+df$ where $\nu \in H^1_{\textrm{bas}}(Z)$ and $f \in C^{\infty}(U_1)$. Rewriting, we have $\alpha = \sigma-p^*\nu-df$. 

Define
$$
\beta:= \tau -\rho_{U_1}(p^*\nu)+fd\rho_{U_1}.
$$
Then, $\beta$ is a basic one-form on $M$ and 
\begin{eqnarray*}
d\beta &=& d\tau-d\rho_{U_1}\wedge{p^*\nu}-d\rho_{U_1}\wedge{df} \\
&=& \eta + d\rho_{U_1}\wedge{\sigma}+d\rho_{U_2}\wedge{\alpha}+d\rho_{U_1}\wedge{p^*\nu}-d\rho_{U_1}\wedge{df} \\
&=& \eta +d\rho_{U_1}\wedge{\sigma}-d\rho_{U_1}\wedge{\alpha}+d\rho_{U_1}\wedge{p^*\nu}-d\rho_{U_1}\wedge{df} \\
&=& \eta.
\end{eqnarray*}
Since $\beta$ is basic, $\iota(V)\beta=0$ for all $V \in \ker(i^*\omega)=\ker(i^*\omega_{s})$ by Lemma \ref{Vtangenttoorbits}. Lemma \ref{kerneltalk} implies that the vector field $W^{s} \in \ker(\omega_s)$ tangent to $M$ differs from the radial vector field $W=\frac{\partial}{\partial{t}} \in \ker(\omega)$ by a vector field tangent to orbits,  Hence, 
\begin{eqnarray*}
\iota(W^{s}_{z})\beta_z &=& \iota\left(\frac{\partial}{\partial{t}}\right) \beta_{z} \\ 
&=& \iota\left(\frac{\partial}{\partial{t}}\right)(\tau_z+\rho_{U_1}(z)(p^*\nu)_z-f(z)(d\rho_{U_1})_z) \\
&=& \iota\left(\frac{\partial}{\partial{t}}\right)\rho_{U_1}(z)(p^*\nu)_z\\
&=& 0
\end{eqnarray*}
(recall that $p^*\nu$ is the pullback of a form on $Z$). Thus, $\ker(\beta) = \ker(\omega_s)$ at all points $z \in Z$. \end{proof}

We end this section with a theorem stating that, under a topological assumption on the orbit space, the existence of a moment-preserving equivariant diffeomorphism forces the existence of an isomorphism in the orientable, four-dimensional case.

\begin{theorem}\label{global}
Let $(M,\omega, \Phi)$ and $(M', \omega', \Phi')$ be compact, orientable, folded symplectic toric four-manifolds. Suppose that $H^2(M/\T; \Z)=0$. If there exists an equivariant diffeomorphism $g:M \to M'$ such that $g^*\Phi' = \Phi$, then $M$ and $M'$ are isomorphic. 
\end{theorem}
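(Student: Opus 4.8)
The plan is to run an equivariant Moser isotopy along the straight-line path $\omega_s=(1-s)\omega+s\,g^*\omega'$ joining $\omega$ to $g^*\omega'$, using the basic primitive produced by Lemma \ref{basicform}. Suppose we obtain an equivariant diffeomorphism $\psi\colon M\to M$ with $\psi^*(g^*\omega')=\omega$. Then $g\circ\psi\colon M\to M'$ is an equivariant folded symplectomorphism, and it is moment preserving: since $g$ is equivariant with $g^*\Phi'=\Phi$, the form $g^*\omega'$ admits $\Phi$ as a moment map for the $\T$-action on $M$, so for each $\xi\in\mathfrak{t}$ equivariance of $\psi$ and $\psi^*(g^*\omega')=\omega$ give $d(\Phi^{\xi}\circ\psi)=-\psi^*(\iota(\xi_{_M})g^*\omega')=-\iota(\xi_{_M})(\psi^*g^*\omega')=-\iota(\xi_{_M})\omega=d\Phi^{\xi}$; hence $\Phi\circ\psi-\Phi$ is a constant $c\in\mathfrak{t}^*$ on the connected manifold $M$. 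Comparing images, $\Phi(M)=(\Phi\circ\psi)(M)=\Phi(M)+c$, and since $\Phi(M)$ is compact this forces $c=0$. Therefore $(g\circ\psi)^*\Phi'=\psi^*\Phi=\Phi$, and $g\circ\psi$ is the required isomorphism. So it suffices to produce $\psi$.

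\textbf{Step 1.} For every $s\in[0,1]$ the form $\omega_s$ is folded symplectic with fold $Z$ and admits $\Phi$ as a moment map. The moment-map claim is immediate: $\iota(\xi_{_M})\omega_s=(1-s)\iota(\xi_{_M})\omega+s\,\iota(\xi_{_M})g^*\omega'=-d\Phi^{\xi}$ for all $\xi\in\mathfrak{t}$. For the rest, a local computation near an arbitrary orbit, using that orbits are isotropic (Proposition \ref{orbitsareisotropic}) together with $g^*\Phi'=\Phi$ in the coordinates of Lemma \ref{lemmarestriction} and of the models of Example \ref{circlestabilizers}, shows that $\omega-g^*\omega'$ is, in suitable invariant coordinates, a term that does not contribute to the top exterior power (near a free fold orbit it is a multiple of $ds\wedge dt$). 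Hence $\omega_s^2=\omega^2$ throughout $M$, so $\omega_s^2$ satisfies the transversality condition of the definition with zero locus $Z$; and $i^*\omega_s=i^*\omega$ by Lemma \ref{lemmarestriction}, which is nonvanishing on $Z$. Lemma \ref{kerneltalk} then records how $\ker(\omega_s)$ sits relative to $\ker(\omega)$ along $Z$.

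\textbf{Step 2.} Apply Lemma \ref{basicform}: since $H^2(M/\T;\Z)=0$, there is a basic one-form $\beta$ on $M$ with $d\beta=g^*\omega'-\omega$ and $\beta_z(W_z)=0$ for every $z\in Z$ and every $W_z\in\ker(\omega_s)_z$. I would then solve, for each $s\in[0,1]$, the equation $\iota(X_s)\omega_s=-\beta$ for a smooth invariant vector field $X_s$ on $M$. Away from $Z$, where $\omega_s$ is nondegenerate, $X_s$ is uniquely determined, smooth, and invariant (as $\omega_s$ and $\beta$ are). The substance is near $Z$, where $\ker(\omega_s)_z$ is two-dimensional, spanned by the orbit-tangent section $V$ of $\ker(\omega)\cap TZ$ (Lemma \ref{Vtangenttoorbits}) and by a vector $W^s$ transverse to $Z$: since $\beta$ is basic we have $\iota(V)\beta\equiv 0$, and the second conclusion of Lemma \ref{basicform} gives $\iota(W^s_z)\beta_z=0$ along $Z$, so $\beta_z$ annihilates $\ker(\omega_s)_z$ --- exactly the condition for $\beta_z$ to lie in the image of $X\mapsto\iota(X)\omega_s$. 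Using the local normal form of Theorem \ref{thmoneindSGW} near $Z$, in which $\omega_s$ has the shape $p^*i^*\omega+t\,\mu_s$, the compatible degeneration of $\beta$ along $Z$ cancels the $1/t$-type blow-up of the inverse of $\omega_s$ in the degenerate direction, so the (forced) solution $X_s$ extends smoothly across $Z$; it can be taken invariant, and is tangent to $Z$ along $Z$, so its flow preserves $Z$. This passage across the fold is the main obstacle: it is where the second conclusion of Lemma \ref{basicform} (hence Lemma \ref{kerneltalk}) is essential and where the folded --- rather than symplectic --- nature of the problem genuinely enters.

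\textbf{Step 3.} Let $\psi_s$ be the flow of $X_s$, defined for all $s\in[0,1]$ by compactness of $M$; each $\psi_s$ is an equivariant diffeomorphism preserving $Z$. Then
\[
\frac{d}{ds}\left(\psi_s^*\omega_s\right)=\psi_s^*\left(\frac{d}{ds}\omega_s+\mathcal{L}_{X_s}\omega_s\right)=\psi_s^*\left((g^*\omega'-\omega)+d(\iota(X_s)\omega_s)\right)=\psi_s^*\left((g^*\omega'-\omega)-d\beta\right)=0,
\]
so $\psi_s^*\omega_s$ is independent of $s$. Evaluating at $s=0$ and $s=1$ gives $\psi_1^*(g^*\omega')=\omega$, and $\psi:=\psi_1$ finishes the proof by the first paragraph.
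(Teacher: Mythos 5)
Your proposal is correct and follows essentially the same route as the paper: apply Lemma \ref{basicform} to get the basic primitive $\beta$, solve $\iota(X_s)\omega_s=-\beta$ (using the vanishing of $\beta$ on $\ker(\omega_s)$ along $Z$ to cross the fold), and run the Moser isotopy to get $\psi_1^*(g^*\omega')=\omega$, composing with $g$ at the end. You supply somewhat more detail than the paper on why $\omega_s$ stays folded symplectic and why $X_s$ extends smoothly across $Z$, and you verify moment preservation via a constancy-plus-compactness argument where the paper instead notes directly that $\iota(X_s)d\Phi^{\xi}=\iota(\xi_{_M})\beta=0$, so the flow itself preserves $\Phi$; these are minor variations, not a different approach.
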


\begin{proof}
By Lemma \ref{basicform}, there exists a basic form $\beta$ such that $d\beta = g^*\omega'-\omega$. Since $\beta_{z}(W_z)=0$ for all $W_z \in \ker(\omega_{s})_z$, there exists $X_s$ such that $\iota(X_s)\omega_s = -\beta$. For all $\xi \in \mathfrak{t}$ we have
\begin{eqnarray*}
-\iota(X_s)d\Phi^{\xi} &=& \iota(X_s)\iota(\xi_{M})\omega_s \\
&=& \iota(\xi_{M})\beta \\
&=& 0 
\end{eqnarray*}
since $\beta$ is basic. Hence, the flow $\{\phi_s\}$ preserves the moment map $\Phi$. 

The pullbacks $\phi_s^*\omega_s$ are independent of $s$ since
\begin{eqnarray*}
\frac{d}{ds}(\phi_s^*\omega_s) &=& \phi_s^*\left(\frac{d}{ds}\omega_s+\mathfrak{L}_{X_s}\omega_s\right) \\
&=& \phi_s^*\left(g^*\omega'-\omega-d\beta\right) \\ &=& 0. 
\end{eqnarray*}
Consequently, $\omega = \phi_0^*\omega_0=\phi_1^*\omega_1=\phi_1^*g^*\omega'$. Define $\psi:M \to M'$ by $\psi:=g \circ \phi_1$. By construction, $\psi$ is an isomorphism. \end{proof}

\section{From Local to Global}

We conclude by combining the results of the previous sections to prove Theorem \ref{maintheorem}, which we restate for convenience.

\begin{theorem*}
Let $(M, \omega, \Phi)$ and $(M', \omega', \Phi')$ be orientable folded symplectic toric four-manifolds. Suppose that $H^2(M/\T; \Z)=0$. Then, $M$ and $M'$ are isomorphic if there exists a diffeomorphism $h:M/\T \to M'/\T$ of manifolds with corners such that $h^*\underline{\Phi}' = \underline{\Phi}$. 
\end{theorem*}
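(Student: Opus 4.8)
The plan is to deduce the theorem from the two main results already established: Theorem \ref{localstructure} produces local isomorphisms lifting $h$, and then a sheaf-theoretic gluing argument — exploiting the hypothesis $H^2(M/\T;\Z)=0$ — assembles these into a single equivariant diffeomorphism $g\colon M\to M'$ with $\pi'\circ g = h\circ\pi$ and $g^*\Phi' = \Phi$; finally Theorem \ref{global} upgrades $g$ to an isomorphism. It is worth noting that $g$ itself need not be an isomorphism, since the folding direction is neither an action nor an angle coordinate and so a moment-preserving equivariant diffeomorphism need not preserve the folded symplectic form; this is precisely why Theorem \ref{global}, rather than the gluing alone, is needed to finish.

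To carry this out, first apply Theorem \ref{localstructure}: there is an open cover $\{U_\alpha\}$ of $M/\T$ and, for each $\alpha$, an isomorphism of folded symplectic toric manifolds $\psi_\alpha\colon \pi^{-1}(U_\alpha)\to(\pi')^{-1}(h(U_\alpha))$ with $\pi'\circ\psi_\alpha = h\circ\pi$. On a double overlap $U_{\alpha\beta}:=U_\alpha\cap U_\beta$ the composite $c_{\alpha\beta}:=\psi_\alpha\circ\psi_\beta^{-1}$ is an equivariant diffeomorphism of $(\pi')^{-1}(h(U_{\alpha\beta}))$ covering the identity and preserving the moment map. Let $\mathcal{S}$ be the sheaf on $M/\T$ whose sections over an open $W$ are the equivariant diffeomorphisms of $\pi^{-1}(W)$ covering $\mathrm{id}_W$ and preserving $\Phi|_{\pi^{-1}(W)}$; via $h$ we identify $c_{\alpha\beta}$ with a section of $\mathcal{S}$ over $U_{\alpha\beta}$. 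Since $\mathcal{S}$ is a sheaf of abelian groups, $\{c_{\alpha\beta}\}$ is a \v{C}ech $1$-cocycle; if its class in $H^1(M/\T;\mathcal{S})$ vanishes, say $c_{\alpha\beta}=b_\alpha b_\beta^{-1}$ with $b_\alpha$ a section of $\mathcal{S}$ over $U_\alpha$ (viewed on the $M'$ side over $h(U_\alpha)$), then the corrected maps $b_\alpha^{-1}\circ\psi_\alpha$ agree on overlaps and glue to $g\colon M\to M'$, which is equivariant, covers $h$, and preserves the moment map because each $\psi_\alpha$ and each $b_\alpha$ does.

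The heart of the matter is thus to prove $H^1(M/\T;\mathcal{S})=0$. Here one first identifies $\mathcal{S}$: a section over $W$ preserves every orbit (it covers $\mathrm{id}_W$) and, being $\T$-equivariant, acts on the orbit over $w$ — a homogeneous space $\T/\stab(w)$ — by translation, which automatically preserves the invariant function $\Phi$. Using the local normal forms at free orbits and at the circle-stabilized orbits of the fold (Proposition \ref{orbitspaceisamanifoldwithcorners}, Example \ref{circlestabilizers}, Lemma \ref{nofixedpoints}) one checks that these fiberwise translations are exactly the assignments ``translate the orbit over $w$ by $\lambda(w)\in\T$'' for a smooth $\T$-valued function $\lambda$ on $W$, so that $\mathcal{S}$ is canonically the sheaf $\mathcal{C}^\infty_{M/\T}(\cdot,\T)$ on the surface with corners $M/\T$. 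The exponential sequence of sheaves $0\to\underline{\Z}^{n}\to\mathcal{C}^\infty_{M/\T}(\cdot,\mathfrak{t})\to\mathcal{C}^\infty_{M/\T}(\cdot,\T)\to 0$, with $n=\dim\T$, together with the fact that $\mathcal{C}^\infty_{M/\T}(\cdot,\mathfrak{t})$ is a fine sheaf (partitions of unity exist on manifolds with corners), yields $H^1(M/\T;\mathcal{S})\cong H^2(M/\T;\Z)^{\oplus n}$, which vanishes by hypothesis. After refining the cover if necessary so that the cocycle is an honest coboundary — harmless, since $M/\T$ is paracompact and restricting the $\psi_\alpha$ does no damage — we obtain $g$ as above, and Theorem \ref{global} applied to $g$ produces the desired isomorphism $M\cong M'$.

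The step I expect to be the main obstacle is the local identification of $\mathcal{S}$ along the non-free part of the orbit space: showing that every equivariant, moment-preserving diffeomorphism covering the identity is globally translation by a smooth $\T$-valued function, in particular that such a translation extends smoothly across the circle-stabilized strata of the fold and across the corners. Once that is in hand, the remaining cohomological input is exactly the hypothesis $H^2(M/\T;\Z)=0$, used here and once more inside Theorem \ref{global}.
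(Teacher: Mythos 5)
Your proposal is correct and follows essentially the same route as the paper: local isomorphisms from Theorem \ref{localstructure}, transition maps identified with $\T$-valued functions on the orbit space, vanishing of the resulting \v{C}ech class via the exponential sheaf sequence and the hypothesis $H^2(M/\T;\Z)=0$, gluing to a moment-preserving equivariant diffeomorphism $g$, and then Theorem \ref{global}. The one step you flag as the main obstacle --- that an equivariant, orbit-preserving diffeomorphism is translation by a smooth $\T$-valued function, including across the singular strata --- is exactly what the paper handles by citing a theorem of Haefliger and Salem rather than verifying it from the local normal forms.
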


\begin{proof}
Since $h:M/\T \to M/\T$ is a diffeomorphism preserving orbital moment maps, Theorem \ref{localstructure} implies that $M$ and $M'$ are locally isomorphic. So, $h$ lifts locally to a collection of isomorphisms $\{g_{i}\}$ on an open cover $\mathcal{V}=\{V_i\}$ of $M$. The maps 
\begin{eqnarray*}
V_i \cap V_j &\to& V_i \cap V_j \\
p &\mapsto& (g_{i}^{-1}\circ{g_j})(p)
\end{eqnarray*}
are equivariant diffeomorphisms that fix orbits. By a theorem of Haefliger and Salem (Theorem 3.1, \cite{hs}), there exists a collection $\{a_{ij}\}$ of smooth maps $a_{ij}:V_i \cap V_j \to \T$ such that $(g_{i}^{-1}\circ{g_j})(p) = a_{ij}(p)\cdot{p}$. 

%We claim that $\{a_{ij}\}$ forms an element $[a] \in \check{H}^1(M/\T,\T)$, the \v{C}ech cohmology of the orbit space with respect to the sheaf of smooth functions with values in $\T$. 
Note that since the $a_{ij}$ preserve orbits, we may think of them as smooth, $\T$-valued functions on $M/\T$ and the collection $\{a_{ij}\}$ forms a class $[a] \in \check{H}^1(M/\T,\T)$, the \v{C}ech cohmology of the orbit space with respect to the sheaf of smooth functions with values in $\T$. 
%Let $\delta$ denote the \v{C}ech coboundary operator. On triple intersections $U_i \cap U_j \cap U_k$,
%\begin{eqnarray*}
%(\delta{a})_{ijk} &=& a_{ij} \circ a_{jk} \circ a_{ik}^{-1} \\
%&=& g_{i}^{-1}\circ{g_j}\circ{g_{j}^{-1}}\circ{g_k}\circ{g_{k}^{-1}}\circ{g_i} \\
%&=& \textrm{Id}_{ijk}.
%\end{eqnarray*}
%Hence, $a = \{a_{ij}\}$ is a $1$-cocycle.
Let $\ell$ and $\mathfrak{t}$ be the sheaves of smooth functions on $M/\T$ with values in the lattice $\ker\{\exp:\mathfrak{t} \to \T\}$ and the Lie algebra of $\T$, respectively. The short exact sequence 
$$0 \to \ell \to \mathfrak{t} \to \T \to 0$$
induces the long exact sequence in cohomology
$$
\cdots \to H^{k-1}(M/\T,\mathfrak{t})\to H^{k-1}(M/\T,\T) \to H^{k}(M/\T,\ell) \to H^{k}(M/\T,\mathfrak{t}) \to \cdots .
$$
Since $M/T$ is paracompact, it admits partitions of unity so the sheaf $\mathfrak{t}$ is fine and hence acyclic. Thus, $H^{k}(M/T,\mathfrak{t})=0$ for all $k>0$. As a consequence, $H^1(M/T,\T)$ is isomorphic to $H^2(M/T,\ell)$. By assumption, $H^2(M/T,\Z)=0$ implying $H^2(M/T,\ell)=0$ as well. The class $[a]$ is thus trivial in $H^1(M/T,\T)$ and the diffeomorphisms $g_i$ glue to give a global diffeomorphism $g:M \to M'$. Since $g$ is a lift of $h$ and $h$ preserves orbital moment maps, we have $g^*\Phi' = \Phi$. By Theorem \ref{global}, this implies that there exists an isomorphism $\psi:M \to M'$. 
\end{proof}

\bibliographystyle{amsalpha}
\bibliography{thesisbiblio}

\end{document}